 \newcommand\blfootnote[1]{%
  \begingroup
  \renewcommand\thefootnote{}\footnote{#1}%
  \addtocounter{footnote}{-1}%
  \endgroup
}
\title{Extracting Alternative Solutions from Benders Decomposition}
\author[1,2]{Matthew Viens\thanks{mviens@wisc.edu}}
\author[2]{William E. Hart\thanks{wehart@sandia.gov}}
\author[1]{Michael Ferris\thanks{ferris@cs.wisc.edu}}
\affil[1]{Department of Computer Sciences,
University of Wisconsin-Madison}
\affil[2]{Center for Computing Research,
Sandia National Laboratories}
\date{September 2025}
\begin{document}
\maketitle
\begin{abstract}
    \blfootnote{Funding: This work was supported in part by the Laboratory Directed Research and Development program at Sandia
National Laboratories, a multimission laboratory managed and operated by National Technology and
Engineering Solutions of Sandia LLC, a wholly owned subsidiary of Honeywell International Inc. for the U.S.
Department of Energy’s National Nuclear Security Administration under contract DE-NA0003525.	Additional funding came from the Morgridge Chair of Computer Sciences at University of Wisconsin-Madison. This research was performed while some authors were participating in the Architecture of Green Energy Systems Program hosted by the University of Chicago, which is supported by the National
Science Foundation (Grant No. DMS-1929348).}
    \blfootnote{Acknowledgements:  We would like to express our sincere gratitude to Cynthia Phillips for her invaluable contributions to this research. We are also grateful to Jared Gearhart for their support and assistance throughout the course of this work.}
    
We show how to extract alternative solutions for optimization problems solved by \nameBenders. In practice, alternative solutions provide useful insights for complex applications; some solvers do support generation of alternative solutions but none appear to support such generation when using \nameBenders. We propose a new post-processing method that extracts multiple optimal and near-optimal solutions using the cut-pool generated during \nameBenders. Further, we provide a geometric framework for understanding how the adaptive approximation in \nameBenders\  relates to alternative solutions. We demonstrate this technique on stochastic programming and interdiction modeling, and we highlight use cases that require the ability to enumerate all optimal solutions.
\end{abstract}

\section{Introduction}
Optimization solvers traditionally return a single optimal solution. We know that multiple solutions naturally occur in a variety of applications, but computational tools to generate alternative solutions remain limited. However, alternative solutions often provide practical value. For example, end-users may have secondary concerns, like unexpressed objectives or modeling uncertainty, that motivate an analysis of alternative solutions \citep{brill_1979_public_planning_alternatives}. There is a growing body of literature that describes methods to generate and diversify alternative solutions \citep[e.g.][]{Lau_2024_mga_alternatives_energy_applications_review, Petit2019_alternative_solutions_as_solution_engineering_review_ijoc, Ahanor_Trapp_DiversiTree_IJOC}, and commercial solvers have begun to integrate this functionality (e.g. Gurobi, CPLEX). However, previous work has not considered decomposition methods that generate alternative solutions. This treatment of more specific algorithms has not been helped by the scattered and diverse names used to describe several solutions to an optimization problem, especially in treating both exact optimal or near optimal solutions; we standardize on the term alternative solutions for this concept. We show how to adapt \nameBenders\ \citep{Benders1962, l_shaped_slyke} to identify alternative solutions. We focus on, and we describe stochastic programming and interdiction applications for our alternative solution exemplars.

We propose a new post-processing method that extracts alternative optimal and near-optimal solutions using the cut-pool generated during \nameBenders. Our overall approach involves three steps. We solve a \nameBenders\  problem to optimality. Then we use the generated cut-pool in our post-processing method to generate candidate alternative solutions. Finally, we filter the candidate alternative solutions with a certification step. We call this approach AOS-Benders (alternative optimal solutions for Benders). 

We motivate AOS-Benders by describing an epigrapical and level-set view of the connection between alternative solutions in the original problem and the \nameBenders\ problem. \nameBenders\ separates into two problems (master and subproblem) that have a corresponding split in variables matching the first-stage and second-stage variables from stochastic programming; we use this first-stage and second-stage variable convention to describe our variable distinction.
We describe two core theoretical results. First, the set of alternative solutions for the \nameBM\  problem contains the set of alternative solutions for the first-stage variables. Second, the converse is generally not true. Alternative solutions in the second-stage variables are also possible. Our framework also treats the differing requirements of generating exact and approximate alternative solutions, and it provides methods for generating both kinds.

The paper is laid out as follows. In Section \ref{sec:background:aos}, we review alternative solution theory and generation methods. In Section \ref{sec:background:benders}, we review \nameBenders. We develop a theoretical analysis and describe AOS-Benders in Section \ref{sec:ext_benders}. We present the Farmer's Problem as a stochastic programming exemplar in Section \ref{sec:farmer}, and we present the s-t Shortest Path Problem as an interdiction exemplar in Section \ref{sec:israeli_wood}. We conclude with a discussion of results and future work in Section \ref{sec:conclusions}. For formatting of variables and data, we use uppercase calligraphic for matrices, uppercase italics for sets, and lowercase (bold) italics for (vector) data/variables. For formatting of operators, we use roman and bold when vectorized.
\section{Alternative Solutions}
\label{sec:background:aos}
We define `alternative solutions' as a term that relates to the generation of several solutions to an optimization problem. There are many contexts where alternative solutions have been treated under differing names including `alternative optimal solutions' by \citet{ paris_economic_foundations_of_symmetric_programming} and \citet{hpwilliams_model_building}, `multiple optimal solutions' also by \citet{paris_mult_opt_lp}, `modeling to generate alternatives' by \citet{brill_1982_alternatives_HSJ_hop_skip_jump, brill_1990_mga_tutorial}, `complete local minimizer (CLM)' sets by \citet{clm_sample_path_optimmization_robinson}, `set of all optimal solutions' by \citet{Convex_Analysis_Rockafellar}, and the `set of minimizing points' by \citet{Convex_Analysis_and_Optimization_Bertsekas_Nedić_Ozdaglar_2013}. The different names correspond to research communities that do not seem to interact. Rockafellar and Bertsekas discuss alternative solutions in existence arguments for solution sets. Robinson treats alternative solutions in sample path optimization for simulation for a discussion about set compactness and connectedness. Paris, Williams, and Brill each treat alternative solutions in discussions for their generation. Even in the generation discussions, the different terms date back to different disciplines. The Management Sciences literature starts with \citet{brill_1979_public_planning_alternatives} and the Agricultural Economics literature starts with \citet{paris_mult_opt_lp}. While both areas continued to develop new generation methods \citep[e.g.][]{brill_1982_alternatives_HSJ_hop_skip_jump, paris_multiple_solutions_quadratic} and debate use cases for alternative solutions \citep[e.g.][]{brill_1990_mga_tutorial, paris_to_mccarl_reply}, there does not appear to be a standard notation or representation for alternative solutions either in the early literature or more recent application-specific reviews \citep[e.g.][]{Lau_2024_mga_alternatives_energy_applications_review}.

Since we are concerned both about theory and generation, our presentation divides naturally into these two parts. In Section \ref{sec:background:aos:theory}, we describe a mathematical framework for alternative optimal and near-optimal solutions based on sublevel sets. We use sublevel sets to describe the differences between alternative solutions to the \nameBM\ problem and alternative solutions to the \nameEF\ problem in Section \ref{sec:ext_benders:impact_of_approx}. In Section \ref{sec:background:aos:generation}, we review previous research and available software to generate alternative solutions in a range of problem types.
We leverage these methods to generate alternative solutions for the first-stage variables in Section \ref{sec:ext_benders:master_problem} and for the second-stage variables in Section \ref{sec:ext_benders:sub_problem}.  
\subsection{Alternative Solutions Theory}
\label{sec:background:aos:theory}
 We adapt a mathematical framework to describe alternative solutions for both exact optimal solutions and near optimal solutions from the existence arguments of Bertsekas and Robinson.
Consider the following optimization problem:
\begin{equation*}
    z^* = \min_{\vecx\in \setX} \funcf(\vecx),
\end{equation*}
where $\funcf$ is an objective function defined over feasible domain $\setX$.
The set of \textit{exact alternative optimal solutions} (\aosExactSetName) is the level set:
\begin{equation*}
    \begin{aligned}
        \levelset(\funcf,\setX, z^*) &= \{\vecx\in \setX\ |\ \funcf(x) = z^*\} \\
                            &= \{\vecx\in \setX\ |\ \funcf(x) \leq z^*\}.
    \end{aligned}
\end{equation*}
These are equivalent definitions for this level set since $\{\vecx\in \setX\ |\ \funcf(\vecx) < z^*\} = \varnothing$, but the second definition is the sublevel set with level $z^*$. Sublevel sets are convex for quasi-convex functions, which includes convex functions \citep[see e.g.][]{Convex_Analysis_and_Optimization_Bertsekas_Nedić_Ozdaglar_2013}. 
Further, we consider a general level value of $\tau$ rather than $z^*$:
\begin{equation*}
    \levelset(\funcf,\setX,\tau) = \{\vecx\in \setX\ |\ \funcf(\vecx) \leq \tau\} .
\end{equation*}
When the level value $\tau > z^*$, this is the set of \textit{approximate alternative optimal solutions} (\aosApproxSetName). We can use this for an absolute or relative tolerance from $z^*$ by choosing $\tau = z^* + \epsilon$ or $\tau = (1+\alpha)z^*$ respectively.
\subsection{Generation of Alternative Solutions}
\label{sec:background:aos:generation}
Previous research has developed methods to extract alternative solutions from both Linear Programs (LPs) and Mixed-Integer Programs (MIPs) for both \aosApproxSetName\ and \aosExactSetName\ sets. We draw a distinction between black-box and white-box generation techniques. The black-box techniques are alternative solution generation methods where the implementation details are unknown.
White-box techniques for LPs include iterative MIP methods \citep{lee2000recursive} and simultaneous discovery methods \citep{PEDERSEN2021121294}. Approaches for MIPs include No-Good Cuts methods for 0-1 problems \citep{balas_no_good_cuts} and heuristic methods like keeping track of the N best incumbents in branch-and-bound \citep{eckstein2015pebbl}.
For black-box methods, both Gurobi and CPLEX provide ways of generating alternative solutions as part of their MIP solvers in solution pool structures. Though the solvers provide guarantees, both note challenges when generating alternative solutions for MIPs that mix continuous and discrete variables \citep{gurobi, cplex}.
Modeling languages like AIMMS, GurobiPy, and Pyomo can generate and represent alternative solutions. AIMMS can use CPLEX's Solution Pool \citep{aimms}, and GurobiPy uses Gurobi's Solution Pool \citep{gurobi}. Pyomo \citep{pyomo} can use
Gurobi's solution pool, and it includes solver-agnostic
methods for generating alternative solutions with non-commercial solvers like GLPK and HiGHS \citep{cimor_aos_report}.
In our experiments in Sections \ref{sec:farmer} and \ref{sec:israeli_wood}, we use the Pyomo generation methods for what Algorithm \ref{alg:aos_benders} calls an AOSKernel. For linear programs, we use the \verb|enumerate_linear_solutions| method (hereafter \nameEnumerateLinear), which implements a version of the vertex-enumeration strategy from \citet{lee2000recursive}. For 0-1 problems, we use the \verb|enumerate_binary_solutions| method (hereafter \nameEnumerateBinary), which implements a version of No-Good Cuts method from \citet{balas_no_good_cuts}. Both methods can exhaustively enumerate when the sublevel set is compact, either in terms of vertices for LPs or all points for 0-1 problems.  Such enumeration forms a core part of our analysis of exemplars. Note that alternative solution generation methods can differ in the order they enumerate or report solutions. We standardize on using an optimal search mode which generates the next closest vertex or point to optimal by objective value for LPs and 0-1 problems, respectively.
\section{Benders Decomposition Review}
\label{sec:background:benders}
We review several elements of \nameBenders\ in some detail below to establish context for the proofs of our core results. More detailed treatments of \nameBenders\  and its variants are available elsewhere in the literature \citep[e.g.][]{birge2011introduction, conforti2014integer}.
We consider problems of the form:
\begin{equation*}
    \begin{aligned}
    \EF:\labelspace&\min_{\vecx,\vecy} \funcg(\vecx) + \vecq^T \vecy \\
    &(\vecx, \vecy) \in \setGamma .
    \end{aligned}
\end{equation*}
We call this the \nameEF\  problem ($\EF$), where $\funcg$ is the first-stage value function, $\vecq \in \mathR^{n_2}$, and \(\setGamma \defeq \{(\vecx, \vecy) \in \mathR^{n_1}\times \mathR^{n_2}_+\ \ |\ \vecx \in \setX, \ \matrixW\vecy + \matrixT\vecx = \vech\}\). We call $\vecx$ the first-stage variables and $\vecy$ the second-stage variables. 
We make several assumptions about problem structure. Let \(\funcY(\vecx) \defeq \{\vecy \in \mathR^{n_2}_+\ \ |\ \vecx \in \setX, \ \matrixW\vecy + \matrixT\vecx = \vech\}\). We make the relatively complete recourse assumption: $\forall \barvecx \in \setX\ \exists \barvecy \in \funcY(\barvecx) \ s.t. \  \vecq^T\barvecy < \infty$. We make the dual non-emptiness assumption: $\{\vecpi \in \mathR^{n_3}\ \ |\ \vecpi^T \matrixW \leq \vecq\} \neq \varnothing$ where $n_3 = \dim (\vech)$. We make the assumption a minimizer exists: $(\vecx^*,\vecy^*) \in \setGamma$. We make the finite solution assumption: $z^* \defeq \funcg(\vecx^*) + \vecq^T \vecy^* > -\infty$. The finite solution assumption implies that $\funcg(\vecx) + \vecq^T \vecy$ is bounded below over $(\vecx, \vecy) \in \setGamma$, and can be established by computing $z^*$ or bounding for some $\underline{z} \in \mathR$ as $z^* \geq \underline{z}$ . 

We define a projection operator onto the subspace of the first $n_1$ (or first-stage) variables:
\begin{equation}
    \label{eqn:proj_x_def}
    \projX(\vecp) \defeq \matrixM \vecp, \quad \matrixM \in \mathR^{n_1 \times \dim(\vecp)}, \quad \matrixM_{ij} = \left\{\begin{matrix}
                      1 \quad i=j \\
                      0 \quad i\neq j
                    \end{matrix} \right .
\end{equation}
We then rewrite the \nameEF\  problem in terms of just the first-stage variables as the projected variable problem ($\PV$):
\begin{equation*}
\begin{aligned}
    \PV:\labelspace&\min_{\vecx} \funcg(\vecx) + \funcQ(\vecx)\\
        &\vecx \in \setX . 
\end{aligned}
\end{equation*}

The value (or recourse) function $\funcQ$ is defined for $\vecx \in \setX$:
\begin{equation*}
\begin{aligned}
    \PVF:\labelspace \funcQ(\vecx) \defeq &\min_{\vecy}\  \vecq^T \vecy \\
    & \vecy \in \funcY(\vecx).
\end{aligned}
\end{equation*}
We call this the \namePrimalValueFunction\  ($\PVF$). 

We use the strong duality of linear programs to give another way of computing $\funcQ$:
\begin{equation}
\begin{aligned}
\label{eqn:benders:q_dual_def}
    \DVF:\labelspace \funcQ(\vecx) = &\max_{\vecpi}\  \vecpi^T(\vech - \matrixT\vecx) \\
    & \vecpi \in \setPi,
\end{aligned}
\end{equation}
where \(\setPi =\{\vecpi \in \mathR^{n_3}\ \ |\ \ \vecpi^T \matrixW \leq \vecq\}\).
We call this the \nameDualValueFunction\  ($\DVF$). 
Note $\setPi$ is independent of the argument $\vecx$, is fixed, and as noted before, non-empty by assumption. We write $\funcQ$ in terms of the vertices of $\setPi$ by leveraging the assumptions made earlier. For $\vecx \in \setX$, $\funcQ(\vecx)$ takes one of three possible states: unbounded above, unbounded below, and finite. The relatively complete recourse assumption rules out unbounded above. The  finite solution assumption rules out unbounded below as $\funcg(\vecx) + \vecq^T\vecy > -\infty, \forall (\vecx, \vecy) \in \setGamma$ implies $\funcQ(\vecx) >-\infty$. We are left with the finite case that requires $\setPi$ to be non-empty, since duality requires $\forall \vecx \in \setX,\ \exists \bar{\vecpi} \in \setPi \ s.t.\ \funcQ(\vecx) = \bar{\vecpi}^T(\vech - \matrixT\vecx)$.
We use the Fundamental Theorem of Linear Programming \citep[Prop 3.4.2]{Convex_Analysis_and_Optimization_Bertsekas_Nedić_Ozdaglar_2013} to write $\funcQ$ in terms of the dual vertices:
\begin{equation*}
    \funcQ(\vecx) = \max_{\vecpi \in \setvpi}\  \vecpi^T(\vech - \matrixT\vecx),
\end{equation*}
where $V(S)$ represents the list of vertices of set $S$. This allows us to write an equivalent problem to the \namePV\  problem:
\begin{equation*}
\begin{aligned}
    \EV:\labelspace &\min_{\vecx,\theta}\  \funcg(\vecx) + \theta \\
    & \theta \geq \vecpi^T(\vech - \matrixT \vecx) \quad \forall \vecpi \in \setvpi \\
    & \vecx \in \setX \subseteq \mathR^{n_1}, \theta \in \mathR .
\end{aligned}
\end{equation*}  
We call this the \nameEV\   ($\EV$) problem. For an optimal solution $\vecp^{(2)} = (\barvecx^{(2)}, \theta^{(2)})$ to the \nameEV\ problem, the projection $\projX(\vecp^{(2)}) = \barvecx^{(2)}$ is an optimal solution to the \namePV\  problem. The converse is true: for  and optimal solution $\barvecx^{(3)}$ to the \namePV, $(\barvecx^{(3)}, \barvecy^{(3)})$ is an optimal solution to the \nameEV\  problem, where $\barvecy$ exists by the relatively complete recourse assumption and $\vecq^T \barvecy^{(3)} = \funcQ(\barvecx^{(3)})$. 

Next we approximate $\funcQ$ by considering only a subset of the dual vertices, $\hatsetV \subseteq \setvpi$:
\begin{equation*}
    \funchatQV(x) \defeq \max_{\vecpi \in \hatsetV}\  \vecpi^T(\vech - \matrixT\vecx).
\end{equation*}
Since the maximum function is monotonically increasing, we know that:
\begin{equation}
    \label{eqn:benders:vertex_lower_bound}
\begin{aligned}
    \funcQ_{\setvpi}(\vecx) = \funcQ(\vecx) &\geq \funchatQV(\vecx) \quad \forall \ \hatsetV \subseteq \setvpi, \forall \vecx \in \setX.
\end{aligned}
\end{equation}
We now define a version of the \nameEV\   problem relying on $\hatsetV$ instead of $\setvpi$, where $\hatsetV \subseteq \setvpi$:
\begin{subequations}
\label{eqn:benders:master_problem}
\begin{align}
    \BMhatV:\labelspace &\min_{\vecx,\theta}\  \funcg(\vecx) + \theta \\
    \label{eqn:benders:master_problem_theta_bounds}
    & \theta \geq \vecpi^T(\vech - \matrixT \vecx) \quad \forall \vecpi \in \hatsetV \\
    & \vecx \in \setX \subseteq \mathR^{n_1}, \theta \in \mathR.
\end{align}  
\end{subequations}
We call this the \nameBM\  problem for $\hatsetV$ ($\BMhatV$). 
By construction, the \nameBM\  problem gives the same optimal first-stage solution(s) and objective value when $\hatsetV = \setvpi$. We include a basic version of the single-cut Benders Algorithm as Algorithm \ref{alg:classic_benders}. 
\begin{algorithm}
\caption{Benders Decomposition Algorithm}\label{alg:classic_benders}
\begin{algorithmic}
\Procedure{Benders}{$tol, \ iterLimit, \ \BM, \ \funcQ$} 
\State $\hatsetV \gets \varnothing$
\While{$|\hatsetV| \leq iterLimit$}
\State $Solve(\BMhatV)$, $\barvecx \gets \vecx^*$, $\theta \gets \theta^*$, $z \gets \funcg(\vecx^*) + \theta^*$
\State $\mbox{Solve}(\funcQ(\barvecx))$, $\vecpi \gets \vecpi^*$, $\hattheta \gets \funcQ(\barvecx)$
\If{$\|\theta - \hattheta\| > tol$}
\State $\hatsetV = \hatsetV_{t-1} \cup  \{\vecpi\}$
\Else
\State \textbf{break}
\EndIf
\EndWhile
\State \Return $(\barvecx, \theta, z, \hatsetV)$
\EndProcedure
\end{algorithmic}
\end{algorithm}

\section{Extending Benders to Yield Alternative Solutions}
\label{sec:ext_benders}
While the ability to automatically generate alternative solutions for LPs and MIPs directly from solvers or modeling languages is useful, this capability does not naturally exist for problems solved by \nameBenders. When we use a problem decomposition, we also split up and approximate core structural information that enabled previous tools to do automatic generation of alternative solutions (Section \ref{sec:ext_benders:impact_of_approx}). To address this, we present methods for treating each of the first-stage and the second-stage alternative solutions (Sections \ref{sec:ext_benders:master_problem} and \ref{sec:ext_benders:sub_problem} respectively), resulting in a combined process capable of treating first-stage, second-stage, and \nameEF\ alternative solutions. The overall process becomes: prove a problem meets (or modify to enforce) the assumptions from Section \ref{sec:background:benders}, divide the problem into first-stage and second-stage components to form the \nameBM\ problem and subproblem, apply Algorithm \ref{alg:aos_benders} for tolerances of interest to get first-stage alternative solutions, and then (if needed) apply the LP alternative solution techniques of Section \ref{sec:ext_benders:sub_problem}. We then analyze the resulting alternative solutions for insights into our optimization problems as seen on examples in Sections \ref{sec:farmer} and \ref{sec:israeli_wood}.
\subsection{Impact of Approximation in Benders Decomposition}
\label{sec:ext_benders:impact_of_approx}
We know choice of $\hatsetV \subseteq \setvpi$ can change the objective value and minimizers in the \nameBM\ problem. As a result, we need to consider the impact this approximation has on the \aosExactSetName\ and \aosApproxSetName\ sets, which we do by comparing the sublevel sets. We define the following sublevel sets for the \nameBM\ problem and the \nameEF\  problems:

\begin{equation}
\begin{aligned}
    \label{eqn:extending_benders:master_problem_level_set_def}
    \levelBM(\tau) &= \levelset(\funcG_{\theta}, \epigraph(\funchatQV),\tau) 
\end{aligned}
\end{equation}
\begin{equation}
\begin{aligned}
    \label{eqn:extending_benders:extensive_form_level_set_def}
    \levelEF(\tau) &= \levelset(\funcG_q, \setGamma, \tau),
\end{aligned}
\end{equation}
where $\funcG_{\theta}(\vecx, \theta) = \funcg(\vecx)+\theta$, $\funcG_{q}(\vecx, \vecy) = \funcg(\vecx)+\vecq^T\vecy$, and $\epigraph(\funchatQV)$ is defined relative to $\epigraph(\funcQ)$ as:
\begin{equation}
    \label{eqn:q_epigraph}
        \epigraph (\funcQ) = \{(\vecx,\theta) \in \setX\times \mathR\ \vert\  \theta \geq \vecpi^T(\vech -\matrixT\vecx), \forall \vecpi \in \setvpi\}
\end{equation}
\begin{equation}
    \label{eqn:q_hat_epigraph}
        \epigraph (\funchatQV) = \{(\vecx,\theta) \in \setX\times \mathR \ \vert\  \theta \geq \vecpi^T(\vech -\matrixT\vecx), \forall \vecpi \in \hatsetV\}
\end{equation}
The following theorem gives us the guarantee that any first-stage alternative solutions for the \nameEF\  problem will be alternative solutions in the \nameBM\  problem:
\begin{theorem}
\label{thm:proj_ef_subseteq_proj_benders}
    $\projX(\levelEF(\tau)) \subseteq \projX(\levelBM(\tau)), \ \forall \hatsetV \subseteq \setvpi, \forall \tau \in \mathR$.
\end{theorem}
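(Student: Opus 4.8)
The plan is to prove the inclusion by a direct element-chasing argument: I take an arbitrary $\barvecx \in \projX(\levelEF(\tau))$ and exhibit a companion value $\bar\theta$ so that $(\barvecx, \bar\theta) \in \levelBM(\tau)$, whence $\barvecx = \projX(\barvecx, \bar\theta) \in \projX(\levelBM(\tau))$. By definition of $\levelEF(\tau)$, the fact that $\barvecx$ lies in its projection means there exists $\barvecy$ with $(\barvecx, \barvecy) \in \setGamma$ and $\funcg(\barvecx) + \vecq^T\barvecy \leq \tau$. In particular $\barvecx \in \setX$ and $\barvecy \in \funcY(\barvecx)$, so the recourse value satisfies $\funcQ(\barvecx) \leq \vecq^T\barvecy$, since $\barvecy$ is feasible for the inner minimization defining $\funcQ$.

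The construction of $\bar\theta$ is the heart of the argument, and the key observation is that a single scalar must do double duty: it has to be large enough to satisfy every epigraph cut in $\hatsetV$, yet small enough to keep the objective below $\tau$. I would show that any $\bar\theta$ in the interval $[\funchatQV(\barvecx),\, \vecq^T\barvecy]$ works, and concretely take $\bar\theta = \vecq^T\barvecy$. Nonemptiness of this interval is exactly the sandwich
\begin{equation*}
    \funchatQV(\barvecx) \leq \funcQ(\barvecx) \leq \vecq^T\barvecy,
\end{equation*}
where the left inequality is the vertex lower bound \eqref{eqn:benders:vertex_lower_bound} (valid for every $\hatsetV \subseteq \setvpi$) and the right inequality is the feasibility bound noted above.

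With $\bar\theta = \vecq^T\barvecy$ I would then verify the three conditions defining membership in $\levelBM(\tau)$. First, $\barvecx \in \setX$ holds from $(\barvecx,\barvecy)\in\setGamma$. Second, epigraph membership requires $\bar\theta \geq \vecpi^T(\vech - \matrixT\barvecx)$ for every $\vecpi \in \hatsetV$; this follows because $\vecq^T\barvecy \geq \funchatQV(\barvecx) = \max_{\vecpi \in \hatsetV}\vecpi^T(\vech - \matrixT\barvecx)$ dominates each individual cut. Third, the level condition $\funcG_\theta(\barvecx,\bar\theta) = \funcg(\barvecx) + \bar\theta = \funcg(\barvecx) + \vecq^T\barvecy \leq \tau$ is just the hypothesis on $\barvecx$. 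Since these hold for arbitrary $\barvecx$ in the left-hand set, arbitrary $\tau \in \mathR$, and arbitrary $\hatsetV \subseteq \setvpi$, the inclusion follows.

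The step I expect to require the most care is the degenerate case $\hatsetV = \varnothing$, where $\funchatQV(\barvecx) = -\infty$ and the sandwich reads $-\infty \leq \funcQ(\barvecx) \leq \vecq^T\barvecy$; here $\epigraph(\funchatQV)$ imposes no cut constraints, so the choice $\bar\theta = \vecq^T\barvecy$ satisfies epigraph membership vacuously and the objective bound as before. Choosing $\bar\theta = \vecq^T\barvecy$ rather than the tighter $\bar\theta = \funchatQV(\barvecx)$ is precisely what lets one construction cover this edge case uniformly. Beyond this the argument is routine; note that the converse inclusion is not claimed, and that relatively complete recourse is not needed for this direction, since $\barvecy$ is supplied by the hypothesis rather than constructed.
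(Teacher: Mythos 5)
Your proof is correct, but it takes a different route from the paper's. The paper proves this inclusion by composing a chain of lemmas through two intermediate problems: it shows $\projX(\levelEF(\tau)) = \levelPV(\tau)$ (Lemma \ref{thm:proj_efp_is_PV}), $\levelPV(\tau) = \projX(\levelEV(\tau))$ (Lemma \ref{thm:PV_is_projected_EV_levelset}), and $\levelEV(\tau) \subseteq \levelBM(\tau)$ (Lemma \ref{thm:benders_contains_EV_levelset}, which rests on the epigraph containment of Theorem \ref{thm:epigraph_containment}); the witness for the $\theta$-coordinate in that chain is $\bartheta = \funcQ(\barvecx)$. You instead collapse the whole chain into a single element-chasing argument with the witness $\bartheta = \vecq^T\barvecy$, sandwiched via $\funchatQV(\barvecx) \leq \funcQ(\barvecx) \leq \vecq^T\barvecy$. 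Both are valid. The paper's decomposition buys reusable intermediate equalities (the $\levelPV$ and $\levelEV$ characterizations are used again in the non-equivalence remarks of Appendix B) and isolates exactly where the approximation $\hatsetV \subseteq \setvpi$ enters, namely in the single epigraph-containment step. Your direct argument is shorter and self-contained, handles the $\hatsetV = \varnothing$ case explicitly (which the paper leaves implicit), and correctly observes that relatively complete recourse is not needed for this direction because $\barvecy$ is supplied by the hypothesis rather than constructed --- the paper's Lemma \ref{thm:PV_is_projected_EV_levelset} invokes that assumption only to guarantee $\funcQ(\barvecx) \in \mathR$, a step your choice of $\bartheta = \vecq^T\barvecy$ sidesteps. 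One small caveat: your appeal to the vertex lower bound \eqref{eqn:benders:vertex_lower_bound} still implicitly relies on the dual-vertex representation of $\funcQ$ and hence on the paper's standing assumptions; if you wanted to make the independence from relatively complete recourse fully explicit, you could instead bound each cut directly by weak duality, $\vecpi^T(\vech - \matrixT\barvecx) \leq \vecq^T\barvecy$ for every $\vecpi \in \hatsetV \subseteq \setPi$, using only the primal feasibility of $\barvecy$.
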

\begin{proof}{Proof}
    This follows directly by the nature of the projection operator of \equationRef{eqn:proj_x_def} and Lemmas 3 and 4 from the Proof Appendix. 
\end{proof}
Since this result is defined over the first-stage variables, it is abstracting away the details of the second-stage variables for the \nameEF\ problem. If $\projX(\levelEF(\tau)) \neq \varnothing$ for some fixed $\tau \in \mathR$, we know there are feasible second-stage points in the sublevel set (e.g. $\barvecx \in \projX(\levelEF(\tau)) \rightarrow |(\{\barvecx\}\times \funcY(\barvecx))\cap \levelEF(\tau)| \geq 1$).

The core insight for the sublevel set properties comes from analyzing $\epigraph (\funcQ)$ and $\epigraph (\funchatQV)$, which gives the following result:
\begin{theorem}
\label{thm:epigraph_containment}
    \(\epigraph (\funcQ) \subseteq \epigraph (\funchatQV),\forall \  \hatsetV \subseteq \setvpi\)
\end{theorem}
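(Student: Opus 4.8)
The plan is to establish the containment directly from the epigraph definitions in \equationRef{eqn:q_epigraph} and \equationRef{eqn:q_hat_epigraph}, using the fact that replacing $\setvpi$ by the subset $\hatsetV$ only removes constraints. Both sets live in the common domain $\setX \times \mathR$ and are cut out by the same family of linear inequalities $\theta \geq \vecpi^T(\vech - \matrixT\vecx)$; they differ solely in the index set over which $\vecpi$ ranges.

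First I would fix an arbitrary pair $(\vecx, \theta) \in \epigraph(\funcQ)$ together with an arbitrary $\hatsetV \subseteq \setvpi$. By \equationRef{eqn:q_epigraph} this pair satisfies $\theta \geq \vecpi^T(\vech - \matrixT\vecx)$ for every $\vecpi \in \setvpi$. Since $\hatsetV \subseteq \setvpi$, each $\vecpi \in \hatsetV$ is also an element of $\setvpi$, so the same inequality holds for every $\vecpi \in \hatsetV$; together with $(\vecx, \theta) \in \setX \times \mathR$ this is precisely the membership condition of \equationRef{eqn:q_hat_epigraph}, so $(\vecx, \theta) \in \epigraph(\funchatQV)$. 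Because the pair and $\hatsetV$ were arbitrary, the stated inclusion follows. As a cross-check I would note the equivalent functional route: the pointwise bound $\funcQ(\vecx) \geq \funchatQV(\vecx)$ from \equationRef{eqn:benders:vertex_lower_bound} immediately yields the epigraph containment via the elementary fact that a pointwise-larger function has the smaller epigraph.

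The argument carries no genuine obstacle — it is a constraint-dropping observation — but the one point deserving explicit emphasis is the direction of the inclusion. Shrinking the dual-vertex set from $\setvpi$ to $\hatsetV$ enlarges the epigraph, since each vertex imposes one constraint and fewer constraints admit more points. I would flag this orientation to forestall confusion with the subset relation $\hatsetV \subseteq \setvpi$, which runs the opposite way, and because this reversed monotonicity is exactly what powers the lower-bounding role of $\funchatQV$ in the Benders master problem.
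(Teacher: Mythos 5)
Your proof is correct and follows essentially the same route as the paper's: take an arbitrary point of $\epigraph(\funcQ)$, observe it lies in $\setX \times \mathR$, and note that the defining inequalities over $\setvpi$ imply the subfamily of inequalities over $\hatsetV \subseteq \setvpi$. The only cosmetic difference is that the paper treats the empty-epigraph case explicitly while your element-wise argument handles it vacuously; your functional cross-check via \equationRef{eqn:benders:vertex_lower_bound} is a fine additional sanity check but not needed.
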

\begin{proof}{Proof}
The inclusion holds trivially if $\epigraph(\funcQ) = \varnothing$. In the case $\epigraph(\funcQ) \neq \varnothing$, we consider $(\barvecx,\bartheta) \in \epigraph(\funcQ)$. We have $(\barvecx,\bartheta) \in \setX \times \mathR$ by definition of $\epigraph(\funcQ)$. All that remains is showing that $\bartheta \geq \vecpi^T(\vech -\matrixT \barvecx), \forall \vecpi \in \hatsetV \subseteq \setvpi$ holds and this follows directly from $\bartheta \geq \vecpi^T(\vech -\matrixT \barvecx), \forall \vecpi \in \setvpi$ as part of the definition of $\epigraph(\funcQ)$. So $(\barvecx,\bartheta) \in \epigraph(\funchatQV)$ giving inclusion in the non-empty case.
\end{proof}
We do know that the converse results are not true in general:
\begin{remark}
$\projX(\levelBM(\tau)) \subseteq \projX(\levelEF(\tau))$ does not hold in general.
\end{remark}
\begin{remark}
    $\epigraph(\funchatQV) \subseteq \epigraph (\funcQ)$ does not hold in general.
\end{remark}
The core intuition for all of the results can be seen in the following example where $\funcQ(\vecx) = |\vecx|$:
\begin{figure}[H]

\subcaptionbox{$\epigraph(\hatQVVariant{\hatV_1}), \ \hatQVVariant{\hatV_1}(x) = x$\label{fig:q_abs_x:v_1}}
{\includegraphics[width=0.3\textwidth]{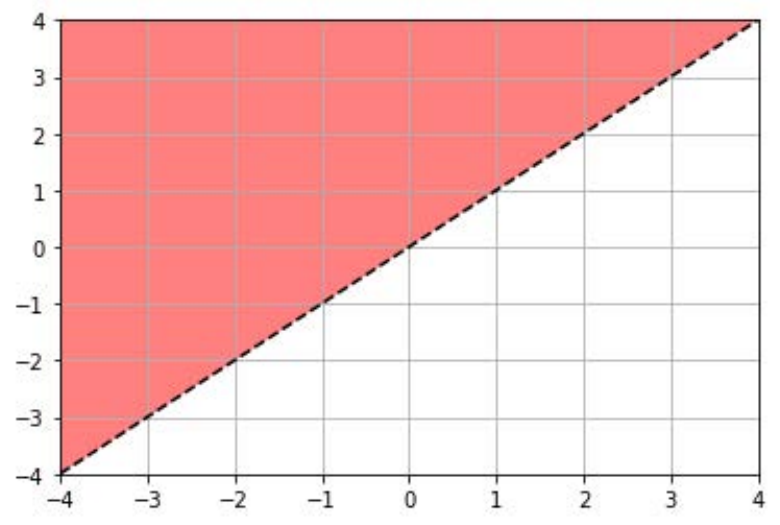}}
\hfill\subcaptionbox{$\epigraph(\hatQVVariant{\hatV_2}), \ \hatQVVariant{\hatV_2}(x) = -x$\label{fig:q_abs_x:v_2}}
{\includegraphics[width=0.3\textwidth]{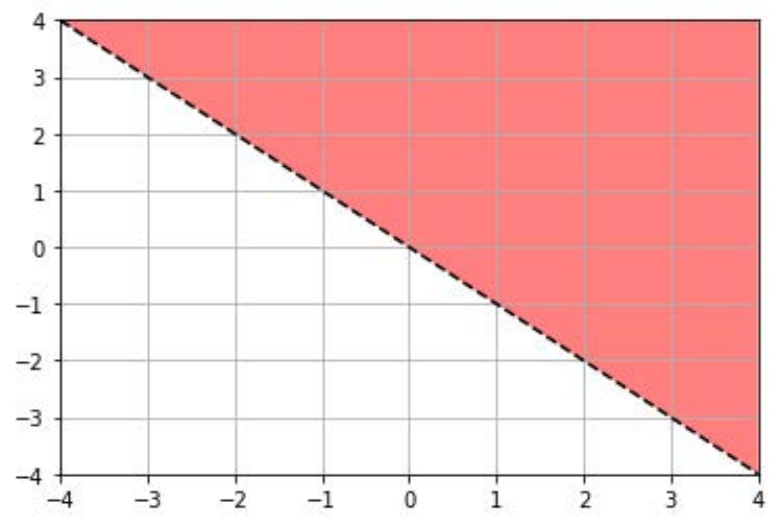}}
\hfill\subcaptionbox{$\epigraph(\hatQVVariant{\hatV_1}), \ \hatQVVariant{\hatV_1}(x) = |x|$\label{fig:q_abs_x:v_3}}
{\includegraphics[width=0.3\textwidth]{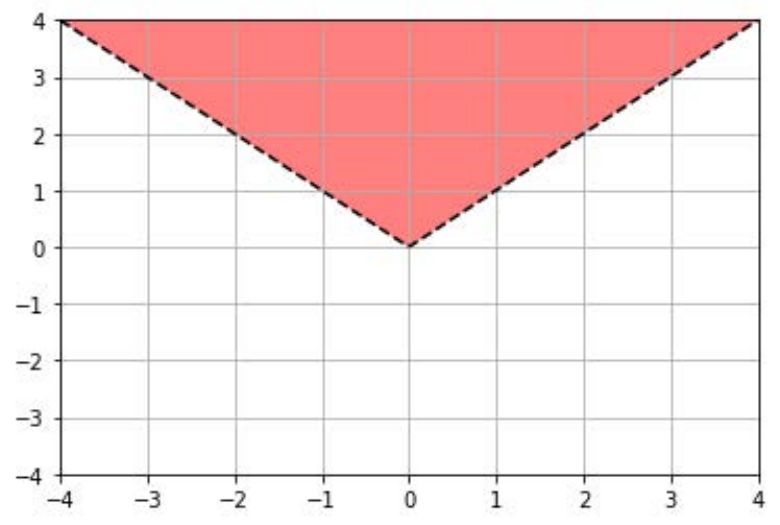}}
\caption{
Examples of $\epigraph(\funchatQV)$ for several values of $\hatsetV$. $\hatsetV_1 = \{1\}$, $\hatsetV_2 = \{-1\}$, $\hatsetV_3 = \{-1,1\}$}
\label{fig:q_abs_x:full_figure}
\end{figure}
We see the epigraph containment result of Theorem \ref{thm:epigraph_containment}: the first two plots contain the third. The converse of Theorem \ref{thm:epigraph_containment} does not hold for this example. The figure also gives us an intuition about Theorem \ref{thm:proj_ef_subseteq_proj_benders} for two reasons. First, the main difference between the the \nameEF\  and \nameBM\ problems is the difference between the exact and approximate value functions. Second, we see the geometric connection between the epigraph and sublevel sets of a function.

\subsection{Extracting Alternative Solutions for First-Stage Variables}
\label{sec:ext_benders:master_problem}
The \nameEF\  and \nameBM\  problems share the first-stage variables. As a consequence of this, we break the generation of alternative solutions to the \nameEF\  problem into two steps. First, we find alternative solutions over the first-stage variables. Second, given a solution for the first-stage variables we generate alternative solutions for the second-stage variables, which we defer to Section \ref{sec:ext_benders:sub_problem}. We note that there are cases where all we need are first-stage decisions (e.g. two-stage stochastic programming for Capacity Expansion); in such cases, being able to generate first-stage decisions separately is a major benefit of AOS-Benders method. We present a three-step method in Algorithm \ref{alg:aos_benders} to achieve this first-stage alternative solution generation.

\begin{algorithm}
\caption{AOS-Benders Algorithm}\label{alg:aos_benders}
\begin{algorithmic}
\Procedure{Benders}{$BendersTol,\ OptTol,\ iterLimit, \ solLimit, \ BM, \ \funcQ, \ \funcg, \ \AOSKernel$}
\State $(\vecx^*, \theta, z^*, \hatsetV) \gets \mbox{Benders}(BendersTol, \ iterLimit,\ BM,\ \funcQ)$ \Comment{Step 1: \textit{Benders Solve}}
\State $\tau \gets z^* + OptTol$
\State $\funcS_{BM} \gets \AOSKernel(\BMhatV, solLimit, \tau)$ \Comment{Step 2: \textit{Generate Candidates}}
\State $\funcS_{proj(EF)} \gets \varnothing$
\For {$(\barvecx, \bar{\theta}) \in \funcS_{BM} $} \Comment{Step 3: \textit{Certify Candidates}}
\State $\mbox{Solve}(\funcQ(\barvecx)), \ \theta \gets \funcQ(\barvecx)$ 
\If{$\funcg(\barvecx) + \theta \leq \tau$}
\State $\funcS_{proj(EF)} \gets \funcS_{proj(EF)} \cup \{\barvecx\}$
\EndIf
\EndFor
\State \Return $\funcS_{proj(EF)}$
\EndProcedure
\end{algorithmic}
\end{algorithm}

There are three steps in AOS-Benders. The first step is \textit{Benders Solve}, which solves the \nameBM\ problem to optimality and returns the optimal objective value, $z^*$, and terminating cuts, $\hatsetV$. For example, we can apply Algorithm \ref{alg:classic_benders}. In step two, \textit{Generate Candidates}, we generate alternative solutions to $\levelBM(\tau)$. We do this by calling a function, $AOSKernel$, to generate alternative solutions for the Benders master problem for $\hat{V}$. For example, $AOSKernel$ can be one of the methods discussed in Section \ref{sec:background:aos:generation} that is suitable for the Benders master problem. The final step is to \textit{Certify Candidates}, which filters the points generated in Step 2 to keep only solutions that are in $\projX(\levelEF(\tau))$. For each $(\barvecx, \bartheta) \in \funcS_{BM}$, we test if 
$\funcg(\barvecx) + \funcQ(\barvecx) \leq \tau$, which is sufficient to guarantee that $\barvecx \in \projX(\levelEF(\tau))$ from the definition \equationRef{eqn:extending_benders:extensive_form_level_set_def}.
\subsection{Extracting Alternative Solutions for Second-Stage Variables}
\label{sec:ext_benders:sub_problem}
Alternative solutions for second-stage variables rely on having the first-stage variables fixed to a specific value. We assume that we have some fixed $\barvecx \in \setX$. We discussed in Section \ref{sec:ext_benders:impact_of_approx} how the approximation in the \nameBM\  problem impacts generating alternative solutions to the \nameEF\  problem, but this impact was localized to the first-stage variables and therefore the \namePV\  problem. As a result, nothing in the value function $\funcQ$ an approximation, which means that the generation of alternative solutions for the second-stage variables can ignore the details of the \nameBM\  problem. 

Here we are concerned with taking a previously-known point, $\barvecx \in \setX$, from the \namePV\  problem and generating alternative solutions for the second-stage variables. This will also let us generate alternative solutions to the overall \nameEF\ problem. The value function, $\funcQ$, is an LP so we can apply any of the alternative solutions generation methods for LPs described in Section \ref{sec:background:aos:generation} like \nameEnumerateLinear. 

To generate alternative solutions to the \nameEF\ problem, we need to address the difference between the $\projX(\levelEF(\tau))$  and $\levelEF(\tau)$  sublevel sets. We recall the definitions as:
\begin{equation*}
    \levelEF(\tau) = \levelset(\funcG_q, \setGamma, \tau) \subseteq \mathR^{n_1 + n_2}.
\end{equation*}
Note that the definition of $\levelEF(\tau)$ makes no use of the fact that we have a fixed $\barvecx \in \setX$. We use this to define the sublevel set of the remaining $\vecy$ options for a fixed $\barvecx \in \setX$:
\begin{equation*}
    \levelset^{P}(w; \barvecx) = \levelset(\vecq^T\vecy, \funcY(\barvecx), w).
\end{equation*}
We also then need to take into account the combined optimality tolerance for \nameEF\ alternative solutions by setting the second-stage optimality tolerance in terms of both the overall tolerance, $\tau$, and the impact of the first-stage decision, $\barvecx \in \setX$. We see how this works in the following theorem to reconstruct combined alternative solutions for the \nameEF:
\begin{theorem}
    Suppose $\tau$ given and $\barvecx \in \projX(\levelEF(\tau))$. Let $\hatvecy \in \levelset^{P}(\tau - \funcg(\barvecx)); \barvecx)$, then $(\barvecx, \hatvecy) \in \levelEF(\tau)$.
\end{theorem}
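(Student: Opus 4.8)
The plan is to prove this by directly unpacking the two defining conditions of membership in $\levelset^{P}(\tau - \funcg(\barvecx); \barvecx)$ and checking that they deliver exactly the two defining conditions of membership in $\levelEF(\tau)$. There is no real analytic content here: the statement is a bookkeeping lemma that reconciles the ``per-stage'' optimality tolerance $\tau - \funcg(\barvecx)$ imposed on the second-stage subproblem with the combined tolerance $\tau$ governing the full \nameEF\ problem.

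First I would expand the hypothesis $\hatvecy \in \levelset^{P}(\tau - \funcg(\barvecx); \barvecx)$. By the definition $\levelset^{P}(w; \barvecx) = \levelset(\vecq^T\vecy, \funcY(\barvecx), w)$, this membership splits into two facts: (i) $\hatvecy \in \funcY(\barvecx)$, and (ii) $\vecq^T \hatvecy \leq \tau - \funcg(\barvecx)$.

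Next I would use fact (i) to establish the feasibility half of the conclusion. Since $\funcY(\barvecx) = \{\vecy \in \mathR^{n_2}_+ \ |\ \barvecx \in \setX,\ \matrixW\vecy + \matrixT\barvecx = \vech\}$, fact (i) asserts precisely that $\hatvecy \in \mathR^{n_2}_+$, that $\barvecx \in \setX$, and that $\matrixW\hatvecy + \matrixT\barvecx = \vech$. These are exactly the three conditions defining $\setGamma$ evaluated at the pair $(\barvecx, \hatvecy)$, so $(\barvecx, \hatvecy) \in \setGamma$. I would then use fact (ii) for the objective half: adding $\funcg(\barvecx)$ to both sides gives $\funcg(\barvecx) + \vecq^T \hatvecy \leq \tau$, i.e. $\funcG_q(\barvecx, \hatvecy) \leq \tau$. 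Combining the two halves with the definition $\levelEF(\tau) = \{(\vecx,\vecy)\in\setGamma \ |\ \funcG_q(\vecx,\vecy) \leq \tau\}$ yields $(\barvecx, \hatvecy) \in \levelEF(\tau)$.

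The one point worth flagging is the role of the hypothesis $\barvecx \in \projX(\levelEF(\tau))$, which the core deduction above does not actually invoke, since the feasibility of $\barvecx$ already follows from $\hatvecy \in \funcY(\barvecx)$. I expect it is present only to rule out the vacuous case: it guarantees $\tau - \funcg(\barvecx) \geq \funcQ(\barvecx)$, so that $\levelset^{P}(\tau - \funcg(\barvecx); \barvecx)$ is non-empty and an admissible $\hatvecy$ exists. Thus the only genuine ``obstacle'' is notational, namely correctly tracking which constraints are bundled into $\funcY(\barvecx)$ versus $\setGamma$, rather than anything mathematical.
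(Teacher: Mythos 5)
Your proof is correct and follows essentially the same route as the paper's: unpack membership in $\levelset^{P}(\tau - \funcg(\barvecx); \barvecx)$ into feasibility ($\hatvecy \in \funcY(\barvecx)$, hence $(\barvecx,\hatvecy)\in\setGamma$) and the objective bound, then rearrange to recover the definition of $\levelEF(\tau)$. The only cosmetic difference is that the paper obtains $\barvecx \in \setX$ from the hypothesis $\barvecx \in \projX(\levelEF(\tau))$, whereas you correctly observe it already follows from the definition of $\funcY(\barvecx)$, relegating that hypothesis to a non-vacuity role.
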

\begin{proof}{Proof}
    Show that $(\barvecx, \hatvecy) \in \setGamma$ and $\funcg(\barvecx) + q^T\hatvecy \leq \tau$ to recover the definition of $\levelEF(\tau)$. \\
    $\barvecx \in \projX(\levelEF(\tau))$ gives $\barvecx \in X$ by definition. Since $\hatvecy \in \levelset^{P}(\tau - \funcg(\barvecx))$ gives $ \hatvecy \in Y(\barvecx)$, then holds $(\barvecx, \hatvecy) \in \setGamma$ by construction. 
    Since $\hatvecy \in \levelset^{P}(\tau - g(\barvecx); \barvecx)$, we know by construction $q^T\hatvecy \leq \tau - g(\barvecx)$. Rearranging gets $\funcg(\barvecx) + q^T \hatvecy \leq \tau$.
\end{proof}
This makes generating extensive-form alternative solutions for fixed first-stage decisions, $\barvecx \in \setX$, a matter of generating linear programming alternative solutions with the second-stage optimality tolerance $w = \tau - \funcg(\barvecx)$. As a result, generating extensive-form alternative solutions is an optional post-processing step to making first-stage alternative solutions with Algorithm \ref{alg:aos_benders}.

\section{Application: Farmer's Problem}
\label{sec:farmer}

We now illustrate how AOS-Benders generates alternative solutions for stochastic programming problems.  We consider a farmer planning problem from \citet{birge2011introduction}  where the farmer has a set of 3 crops (wheat, corn, and sugar beets) and cattle to feed. In the first-stage, the farmer controls how many acres of land of each crop are planted. In the second-stage, the farmer either buys or sells crops to meet a cattle feed target and to minimize financial loss.
\subsection{Model}
\label{sec:farmer:models}
The scenario-based stochastic version with variable crop yields is:
    \begin{equation*}
        \label{eqn:farmer:stochastic}
    \begin{aligned}
        \min_{\vecx} & (150x_1 + 230x_2 + 260x_3) \\
    & \quad+\sum_{\omega\in\setOmega}p^{(\omega)}\left[(238y_1^{(\omega)}+210y_2^{(\omega)}) - (170w_1^{(\omega)} + 150w_2^{(\omega)} + 36w_3^{(\omega)} + 10 w_4^{(\omega)})\right] \\
    & x_1 + x_2 + x_3 \leq 500 \\
    & \xi_{1}^{(\omega)} x_1 + y_{1}^{(\omega)} - w_{1}^{(\omega)} \geq 200\;\forall\;\omega\in\setOmega\\
    & \xi_{2}^{(\omega)} x_2 + y_{2}^{(\omega)} - w_{2}^{(\omega)} \geq 240\;\forall\;\omega\in\setOmega\\
    & \xi_{3}^{(\omega)} x_3 - w_{3}^{(\omega)} - w_{4}^{(\omega)} \geq 0\;\forall\;\omega\in\setOmega\\
    & w_{3}^{(\omega)}  \leq 6000 \\
    & \vecx,\vecy^{(\omega)},\vecw^{(\omega)}\geq \veczero\;\forall\;\omega\in\setOmega
        \end{aligned}
    \end{equation*}
Here $\vecxi^{(\omega)}$ is a vector denoting the yields-per-acre planted under scenario $\omega$. There are three crops: wheat (1), corn (2), and beets (3 \& 4). $\vecx$ is the number of acres of each crop planted. $\vecy$ and $\vecw$ are the number of tons of crops purchased and sold respectively. $\vecw$ has two values for sugar beets to reflect the two sale points. The following equations are used to decompose this model into the form used by \nameBenders:
\begin{equation*}
\begin{aligned}
    \funcg(\vecx) &= 150x_1 + 230x_2 + 260x_3 \\
    \setX &= \{\vecx \in \mathR^3_+ \ \ |\ \ x_1 + x_2 + x_3 \leq 500\} \\
    \funcQ(\vecx) &= \sum_{\omega \in \setOmega} p^{(\omega)} \funcQ_k(\vecx, \vecxi^{(\omega)})
\end{aligned}
\end{equation*}
\begin{equation*}
    \begin{aligned}
    \label{eqn:farmer:subproblems}
        \funcQ_k(\vecx, \vecxi) = &\min_{\vecy, \vecw, \vecu} (238y_1+210y_2) - (170w_1 + 150w_2 + 36w_3 + 10 w_4) \\
        & \xi_1 x_1 + y_1 - w_1 \geq 200\\
        & \xi_2 x_2 + y_2 - w_2 \geq 240\\
        & \xi_3 x_3 - w_3 - w_4 \geq 0\\
        & w_3 \leq 6000 \\
        & \vecy,\vecw, \vecu \geq \veczero
    \end{aligned}
\end{equation*}
Again, slack variables $\vecu$ can be introduced to convert from inequality to equality constraints, but we omit them to simplify our presentation. 

Both the single-scenario and multiple-scenario cases of this problem are linear programs with continuous variables. This means there are convex sublevel sets in both the \nameEF\  and \nameBM\  problems. We also have relatively complete recourse in the value function, because we can always choose $\barvecy = [200 -\xi_1 x_1 ,\ 240 -\xi_2 x_2]^T$ and $\vecw = 0$ for a cost of $[238,\ 210]\barvecy$ for $\vecx \in \setX$. We note that we have at least one dual vertex because $\funcQ(\veczero, \vecxi) = [238,\  210][200, \ 240]^T = 98000$ for all $\vecxi \in \mathR^3$. We have $z^*$ bounded below because $\funcQ(\vecx, \vecxi) \geq -500 \max\{170 \xi_1, 150 \xi_2, 36 \xi_3\}$, which corresponds to growing only the most profitable crop and selling it all to the market. So long as $\vecxi \in \mathR^3$, we have $z^* \in \mathR$ satisfying the finite solution assumption. This suffices to demonstrate that both the deterministic and stochastic Farmer's problem fit into our framework and assumptions.

\subsection{Meaning of Alternative Solutions}
\label{sec:farmer:meaning_of_alts}

There are two key things to note about the meaning of these alternative solutions. First, we can generate alternative solutions in the $\setX$ (or \namePV) space. As this is a staged stochastic problem, this means that we can generate alternative solutions for first-stage variables without generating corresponding second-stage variables. This significantly reduces the complexity of generating alternative solutions for first- and second-stage variables. Even in this simple problem with $N$ scenarios, this reduces the size of the variable space to explore first-stage alternative solutions from $\mathR^{3+5N}_+$ to $\mathR^{3}_+$. 

Second this is a continuous problem, so we may have an infinite number of points in any \aosApproxSetName\ set even if the \aosExactSetName\ set has a single point. This means that we need to consider techniques that emphasize discovery of the ``interesting" or ``meaningful" alternative solutions. What ``interesting'' means can be problem-specific. In some cases, a structured vertex representation may suffice as with the \nameEnumerateLinear\ method we use here. In others, solution diversification strategies may be needed \citep[e.g.][]{danna2007_multiple_solutions_for_mixed_integer_programs_mips_ILOG_CPLEX,Petit2019_alternative_solutions_as_solution_engineering_review_ijoc}.
\subsection{Extracting Alternative Solutions from the Farmer's Problem}
\label{sec:farmer:example}
\subsubsection{Single-Scenario Problem} 
We tested AOS-Benders on the single-scenario mean-yield Farmer's Problem by choosing $|\setOmega| = 1$ and $\vecxi = [2.5, 3, 20]^T$. After 9 iterations and 8 cuts, the Benders Algorithm converges to the known optimal point $z^* = -118,600$, $\vecx^* = [120, 80, 300]^T$, $\vecy^* = [0,0]^T$, $\vecw^* = [100, 0, 6000]$. 
We use the \nameEnumerateLinear\ method from Pyomo 6.8.1 to generate alternative solutions for both the \nameBM\  problem (i.e. the Algorithm \ref{alg:aos_benders} AOSKernel) and subproblem. This method enumerates up to $K$ vertices subject to an optimality tolerance according to a search mode. We use the `optimal' search mode, which orders vertices on the basis of the original objective, and we use an absolute optimality tolerance relative to the optimal objective value. In the limit, this reduces to exhaustive discovery of the vertices of the constraint space. As a result, when less than $K$ solutions are returned, we have exhaustively enumerated the feasible vertices. 

First, we enumerated exact optimal solutions ($0\%$ optimality tolerance) and found that there was only one point, the optimal solution to the \nameBM\  problem for these cuts. By Theorem \ref{thm:proj_ef_subseteq_proj_benders}, we know that all the exact optimal first-stage solutions to the \nameEF\  problem are contained in this set. So we have found the one exact optimal first-stage candidate solution to the \nameEF\  problem and have it certified as a true solution by the termination condition for Algorithm \ref{alg:classic_benders}.  Next we enumerate for the subproblem given $\vecx^*$: we find only one solution, which is the known recourse decision as an exact optimal solution. This matches expectations: the farmer is likely to take a unique cost-optimal recourse. 

Second, we enumerated approximate optimal solutions given a $1\%$ optimality tolerance; given $z^* = -118,600$, this means means we consider solutions with objective values in $[-118,600, -117,414]$ (or $\levelBM(\tau)$, $\tau = -117,414$). We set $K=10$ and generated 6 points ($\vecx^*$ and 5 new points), which are the vertices of $\projX(\levelBM(\tau))$. All 5 new solutions had objectives of $-117,414$ indicating that they are exactly along the $1\%$ optimality boundary. All 5 of these new vertices in $\projX(\levelBM(\tau))$ then passed the certification step in Algorithm \ref{alg:aos_benders}. We then know by Theorem \ref{thm:proj_ef_subseteq_proj_benders} and compactness of $\setX$ that we have $\projX(\levelEF(-117,414)) = \projX(\levelBM(-117,414))$. This means we have, and can prove we have, a representation of the $1\%$ optimal first-stage decisions as the convex combination of these 6 points.
\subsubsection{Multiple-Scenario Problem}
We tested our generation of alternative solutions on the stochastic farmer's problem with three scenarios. The first scenario was the same as the mean yield used in the single-scenario version (i.e. $\vecxi = [2.5, 3, 20]^T$). The second and third scenarios were yields of 20\% above and below mean across crops.
It took 11 iterations and 10 cuts for \nameBenders\  to converge.  The expected optimal solution of $\vecx^*=[170, 80, 250]$ with $z^* = -108,390$ was discovered. 
We use the same enumeration methods as in the deterministic case with the same exact and then approximate solution exploration approach. When we use \nameEnumerateLinear\ with optimality tolerance $0\%$, we get only one point: $\vecx^*$. By Theorem \ref{thm:proj_ef_subseteq_proj_benders}, we conclude $\projX(\levelEF(z^*)) = \{\vecx^*\}$, meaning we have found the only exact optimal first-stage decision. We then generated alternative solutions for second-stage variables given $\vecx^*$, and we again get only one solution per scenario.

Next, we consider the generation of approximate solutions with two optimality tolerances: $1\%$ and $50\%$. When the optimality tolerance is $1\%$, $\tau = -107,306.1$.  We get 15 possible solutions from \nameEnumerateLinear\ when $K = 50$; these are the 15 vertices for $\levelBM(-107,306.1)$. After we apply the certification step of Algorithm \ref{alg:aos_benders}, only 11 points are in $\projX(\levelEF(-107,306.1))$ for the first-stage decisions. In the $50\%$ optimality case ($\tau = -54,195$) again with $K=50$, we get 43 vertices for $\levelBM(-54,195)$. After the certification step, only 29 are in $\projX(\levelEF(-54,195))$ for the first-stage decisions. In both $1\%$ and $50\%$ optimality tolerance, we discover $\projX(\levelBM(\tau)) \neq \projX(\levelEF(\tau))$ since not all vertices from $\projX(\levelBM(\tau))$ are admitted to $\projX(\levelEF(\tau))$.
\subsection{Discussion}
This example illustrates that we can generate alternative solutions for stochastic problems solved by \nameBenders. We are able to prove the uniqueness of the optimal crop planting strategy even with the problem decomposed in both the deterministic and stochastic cases. We also see that in both the one- and three-scenario cases the majority of the \aosApproxSetName\ \nameBM\ solutions are also first-stage solutions to the corresponding \aosApproxSetName\ \nameEF\ problem. This shows that even approximate alternative solutions discovered using \nameBenders\ can map to alternative solutions to the first-stage \nameEF\  problem. This is likely a function of having sufficient density of cuts near the optimal solutions. Finally, we are able to determine when $\projX(\levelBM(\tau)) = \projX(\levelEF(\tau))$ holds by exhausting the vertices of $\projX(\levelBM(\tau))$.

\section{Application: \IW\  Shortest Path Interdiction}
\label{sec:israeli_wood}
The following example illustrates the use of AOS-Benders on an interdiction problem described by \citet{israeli_wood}.  They present a variant on the shortest path interdiction problem described as ``maximize the shortest $s-t$ path length in a directed network by interdicting arcs" or the Maximizing the Shortest Path (MXSP) problem. They treated this problem with both \nameBenders\  and several advanced \nameBenders\  variants.  
MXSP models two agents, a defender, and an attacker. The defender (or ``network user'') wants to cross from $s$ to $t$. The attacker (or ``interdictor'') wants to make that as high a cost as possible. In this version of the problem, the attacker is worsening the best-case $s-t$ traversal in the network, and the objective value is the cost of that best traversal.

Knowing whether multiple traversals exist that achieve the optimal value has a clear value in the security context, and identifying what (or some of) those optimal traversals are also has clear value in the security context.
We can also move beyond the optimal traversal to consider near optimal traversals. This is especially useful when the instance data is an estimate or noisy. Asking the same two questions about near-optimal traversals has clear value if this attacker-defender problem is used to plan defender reaction. Similarly, the same questions about optimal traversal existence and identification apply to optimal (and near-optimal) attacks as well.
\subsection{Models}
\label{sec:israeli_wood:models}
We adapt the Extensive Form and Bender Decomposition models that \IW\ generated for this problem (specifically their Algorithm 1 and 1-E models). 

\subsubsection{Original Formulation}
\label{sec:israeli_wood:models:original_form}

The MXSP interdiction problem is a max-min problem where the attacker decides how to interdict paths for the defender. The defender minimizes its cost to traverse from $s$ to $t$ on graph $G=(\nodeSet,\arcSet)$. 
The defender traversal route is recorded by $y_k$ where $y_k = 1$ if traversed and $0$ otherwise. This means that a defender path is defined as $\{k \in \arcSet\ \ |\ \ y_k = 1\}$. The attacker interdiction choices are recorded by $x_k$ where $x_k = 1$ if interdicted and $0$ otherwise. 
The source node is $s$ and the sink node is $t$. We use special sets to denote arcs entering and exiting node $i$ as $\funcRS(i)$ and $\funcFS(i)$ respectively. Arcs have two components to their traversal cost, the baseline cost $c_k \in [0, \infty)$ and the added cost if interdicted $d_k \in [1, \infty)$. The attacker has a budget of $m \in [1,\infty)$ available to spend, and each arc has an interdiction cost $r_k \in [1, \infty)$.
Note that we do not explicitly enforce the integrality of $\vecy$. The constraint structure of the inner minimization problem is totally unimodular, which induces integrality \citep[see][Section 4.2]{conforti2014integer}. We assume that our solver returns a vertex solution (e.g. like the simplex method).

The max-min model fits into our \nameEF\ structure in Section \ref{sec:background:benders} by applying the transform $\max_{a \in A} \funcf(a) =\min_{a \in A} -\funcf(a)$. Thus we have:
\begin{equation}
\label{eqn:israeli_wood:extensive_form_our_notation}
\begin{aligned}
    \min_{\vecx \in \setX} \min_{\vecy} &-\sum_{k \in \arcSet} (c_k+x_kd_k)y_k \\
    \sum_{k \in \funcFS(i)} y_k - \sum_{k \in \funcRS(i)} y_k &= \left \{ \begin{matrix}
        1 & i=s \\
        -1 & i =t \\
        0 & \forall i \in \nodeSet\backslash \{s,t\}
    \end{matrix} \right. \\
    y_k &\geq 0 \ \ \forall k \in \arcSet \\
    \setX &= \{\vecx \in \{0,1\}^{|\arcSet|}\ \ |\ \vecr^T \vecx \leq m\}
\end{aligned}
\end{equation}
The first-stage elements of \nameBenders\ are:
\begin{equation*}
\begin{aligned}
    \funcg(\vecx) &\defeq 0, \  \setX \defeq \{\vecx \in \{0,1\}^{|\arcSet|}\ \ |\ \vecr^T \vecx \leq m \}.
\end{aligned}
\end{equation*}
The second-stage elements are:
\begin{subequations}
\label{eqn:iw_q_def}
\begin{align}
\label{eqn:iw_q_def:objective}
\funcQ(\vecx) = &\min_{y} -\sum_{k \in \arcSet} (c_k+x_kd_k)y_k \\
    s.t. \labelspace\sum_{k \in \funcFS(i)} y_k &- \sum_{k \in \funcRS(i)} y_k = \left \{ \begin{matrix}
        1 & i=s \\
        -1 & i =t \\
        0 & \forall i \in \nodeSet\backslash \{s,t\}
    \end{matrix} \right. \\
   & y_k \geq 0 \ \ \forall k \in \arcSet 
\end{align}
\end{subequations}
This interdiction problem takes on special meaning; the $\funcQ$ is the value function of the defender's response to an attack. 

\subsubsection{Benders Decomposition Form}
\label{sec:israeli_wood:models:classic_benders}

The resulting model fits into Section \ref{sec:background:benders} format:
\begin{subequations}
\label{eqn:iw_benders_master_problem_our_format}
\begin{align}
    \min_{\vecx \in \setX, u \in \mathR} \quad &u \\
    \label{eqn:iw_benders_master_problem_our_format:cuts}
     u&\geq \ -\vecc^T \vecy - \vecx^T \matrixD\vecy \ \ \forall \hatvecy \in \hatsetV
\end{align}
\end{subequations}
Here $\matrixD = diag(\vecd)$.  The constraint structure in \equationRef{eqn:iw_benders_master_problem_our_format:cuts} matches the structure of the objective in \equationRef{eqn:iw_q_def:objective}. This leads to an interpretation of $\funcQ$ in \equationRef{eqn:iw_q_def} as the dual definition of the value function, (\ref{eqn:benders:q_dual_def}). Both of these features are interdiction modeling structure addressed in \citet{brown_defending_critical_infrastructure}.

The overall approach to solving the MXSP problem is to generate select candidate paths through the network via the subproblems, and only needing to optimize against those in the master problem. It is the selective generation of candidate paths that motivates using \nameBenders\  for this problem. We only consider the ``short'' paths for the attacker to interdict in the \nameMaster\  problem and the delayed constraint generation approach allows us to only consider those paths ``short" enough to be of interest to the attacker.
We compare this to the \nameMaster\  problem that \IW\  define as:
\begin{equation}
\label{eqn:israeli_wood:benders_master_problem}
\begin{aligned}
    \mbox{Master}(\hatsetV)\mbox{-1a:}\labelspace &\max_{\vecx \in \setX, z\in \mathR} z \\
    z \leq & \ \vecc^T\hat{\vecy} + \vecx^T\matrixD\hatvecy \ \ \forall \hatvecy \in \hatsetY
\end{aligned}  
\end{equation}
We see that the difference between the two \nameMaster\  problems is $u^* = - z^*$. Note that \IW\  call their dual vertices object $\hatsetY$. In this problem, each dual vertex, $\hatvecy$, has meaning as a specific defender $s-t$ path,  making the collection of paths $\hatsetY$. We maintain our format for standard dual vertices, $\hatsetV$ and $\setvpi$, since it is more general.

\subsubsection{Analyzing the \IW\  Model}
\label{sec:israeli_wood:models:analysis}

There are several structural components of this model that relate to our \nameBenders\  methodology described in Section \ref{sec:background:benders}. The Benders Form of \IW\  relies on a specific part of the max-min structure that lets the subproblem of \equationRef{eqn:iw_q_def} serve as the DVF form of the value function. This enables the cuts in \equationRef{eqn:iw_benders_master_problem_our_format} to be defined over the same variables as in the \nameEF. This is a more general principle of interdiction modeling and relates to the difference between ``capacity" and ``cost" interdiction modeling discussed in \citet{brown_defending_critical_infrastructure}.
The subproblem has an LP structure, and it is also clear that optimal vertices are integer valued as a result of the total unimodularity of the network flow constraint structure. Additionally, the subproblem is the well known shortest s-t path problem, and Israeli and Wood leverage specialized algorithms to find shortest paths \citep[i.e.][]{Byers_Waterman_Shortest_Path}. 

The dual non-emptiness assumption corresponds to the existence of a feasible $s-t$ flow. We satisfy relatively complete recourse and finiteness assumptions by noting the optimal cost is bounded above by the fully interdicted s-t path 
and bounded below by $\funcQ(0)$.
Thus, our assumptions are satisfied if $\arcSet$ is finite and $c_k, d_k \in \mathR_+, \forall k \in \arcSet$.
The original problem is not convex because the $\vecx$ are integral, so the \nameMaster\  problem remains nonconvex. \nameBenders\  splits the MILP structure into a binary program and a linear program. This has advantages for the generation of alternative solutions, we can avoid limitations of commercial solvers when generating alternative solutions (see Section \ref{sec:background:aos:generation}).

\subsection{Extracting Alternative Solutions From \IW}
\label{sec:israeli_wood:examples}

We demonstrate our methods on the following MXSP problem. We exhaustively generate possible optimal actions of the attacker and defender. This allows us to examine the overlap in optimal actions for both agents across the number of attacked arcs, which goes beyond a traditional sensitivity analysis. Our examples rely on the following directed graph where we flow from $s$ to $t$:
\begin{figure}[H]
\centering
\includegraphics[width=.8\textwidth]{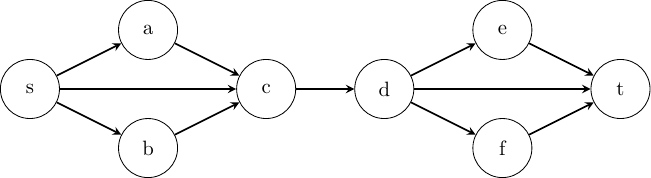}
\caption{Network used for s-t Flow Interdiction. }\label{ig:israeli_wood:examples:exp}

\end{figure}

We set $c_k = 1$ and $d_k = 3$ for all arcs $k \in \arcSet$. All arcs have the same default cost to transit, and interdicted arcs are still passable but cost enough to force avoidance if possible. We allow the attacker to choose $m$ arcs, so $\setX=\{\vecx \in \{0,1\}^{|\arcSet|}\ \ |\ \ \sum_{k \in \arcSet} x_k \leq m\}$. We apply our AOS-Benders process to generate alternative solutions. We use \nameEnumerateBinary\ from Pyomo 6.8.1 to generate alternative solutions for the \nameBM\  problem (i.e. the Algorithm \ref{alg:aos_benders} AOSKernel). We use \nameEnumerateLinear\ to generate alternative solutions for the subproblem. In both cases, we use the optimal search mode with the GLPK solver. We report our results in the max-min sense, using non-negative objective values to match the path cost in the MXSP problem.

\subsubsection{Attack 1 Arc Example}

When the attacker may only attack one arc and the attack impacts are all the same, it makes intuitive sense that the attacker picks arcs that the defender must necessarily traverse to travel from $s$ to $t$ (i.e. chokepoints). In this network, the only chokepoint is $c \rightarrow d$. 
By inspection, we see that this is the unique optimal attack. The attack to interdict $c \rightarrow d$ is the only answer that achieves a defender best traversal cost of 6.
However, the \nameBM\  problem can converge to the optimal cost of 6 with the interdiction choice as $c \rightarrow d$ with different cutpools. The following cuts serve as a certificate of \nameBM\  converging with $\hatsetV \neq \setvpi$ and the terminating cutpool, $\hatsetV$, we encountered is:
\begin{equation*}
\begin{aligned}
    z &\leq 3 + 3(x_{s,c} + x_{c,d} + x_{d,t}) \\
    z &\leq 4 + 3(x_{s,a} + x_{a,c} + x_{c,d} + x_{d,t}) 
\end{aligned}
\end{equation*}

When we run \nameEnumerateBinary\ to exhaustion on \nameBM\ problem: $\BMhatV$, it returns two options with an optimal cost of 6, interdict $c \rightarrow d$ or interdict $d \rightarrow t$. Since we use the optimal mode with a tolerance of zero, we know that these are the only two possible exact optimal solutions for these cuts by Theorem \ref{thm:proj_ef_subseteq_proj_benders}.
The two options make sense for the cuts in the \nameMaster\  problem, but interdicting $d \rightarrow t$ needs to be put through the rest of the AOS-Benders' three-step process. In the certification stage, interdicting $d \rightarrow t$ has a defender cost of 4, and is therefore suboptimal. As a result, we recover only one exact optimal solution for the attacker:interdicting $c \rightarrow d$. We know by Theorem \ref{thm:proj_ef_subseteq_proj_benders} this is the only such solution that achieves an optimal cost of 6. We also generated the alternative solutions for the defender to the optimal attack using \nameEnumerateLinear. The only exact optimal defender path is $s \rightarrow c \rightarrow d \rightarrow t$.

\subsubsection{Attack 2 Arcs Example}
\label{sec:israeli_wood:examples:exp_2}

When the attacker interdicts 2 arcs, our Benders Algorithm terminates optimally by interdicting $c \rightarrow d$ and $s \rightarrow c$ with the following cuts, $\hatsetV$:
\begin{equation*}
\begin{aligned}
    z &\leq 3 + 3(x_{s,c} + x_{c,d} + x_{d,t}) \\
    z &\leq 4 + 3(x_{s,a} + x_{a,c} + x_{c,d} + x_{d,t}) \\
    z &\leq 4 + 3(x_{s,c} + x_{c,d} + x_{d,e} + x_{e,t}) 
\end{aligned}
\end{equation*}
Based on the graph symmetry, we might also expect an interdiction strategy of $c \rightarrow d$ and $c \rightarrow t$ to be optimal. When we run \nameEnumerateBinary\ on this terminating model: $\BMhatV$, we get three options with an optimal cost of 7 according to the \nameBM\  problem: 
\begin{enumerate}
    \item Interdicting $s \rightarrow c$ and $c \rightarrow d$
    \item Interdicting $s \rightarrow c$ and $d \rightarrow t$
    \item Interdicting $c \rightarrow d$ and $d \rightarrow t$
\end{enumerate}
We can check each of these three solutions with the true subproblem value and get that options 1 and 3 as expected are exact optimal solutions with a true cost of 7. Option 2 is not an exact optimal attack with a true cost of 5. Again by Theorem \ref{thm:proj_ef_subseteq_proj_benders}, we know that options 1 and 3 are the only exact optimal attacks.

When we look at alternative solutions in the defender response, we get a split in behavior depending on what is attacked. In option 1, the defender can respond by using $s \rightarrow a \rightarrow c \rightarrow d \rightarrow t$ or $s \rightarrow b \rightarrow c \rightarrow d \rightarrow t$. In option 3, the defender can respond by using $s \rightarrow c \rightarrow d \rightarrow e \rightarrow t$ or $s \rightarrow c \rightarrow d \rightarrow f \rightarrow t$.
The defender paths then only and always overlap in being forced to transit over $c \rightarrow d$.

\subsubsection{Attack 3 Arcs Example}
\label{sec:israeli_wood:examples:exp_3}

When the attacker may attack three arcs and the attack impacts are still all the same, we intuitively expect an interdiction strategy like the previous case that cancels out the symmetry by interdicting $s \rightarrow c$, $c \rightarrow d$, and $d \rightarrow t$. This is in fact an optimal solution at which our Benders Algorithm terminates with an optimal defender traversal cost of 8. 
When we then run \nameEnumerateBinary, the only option it returns is the strategy to interdict $s \rightarrow c$, $c \rightarrow d$, and $d \rightarrow t$. We now know that this is the only optimal solution by Theorem \ref{thm:proj_ef_subseteq_proj_benders}.
When we look at the defender response paths, we get alternative solutions there:
\begin{enumerate}
    \item $s \rightarrow a \rightarrow c \rightarrow d \rightarrow e \rightarrow t$
    \item $s \rightarrow b \rightarrow c \rightarrow d \rightarrow e \rightarrow t$
    \item $s \rightarrow a \rightarrow c \rightarrow d \rightarrow f \rightarrow t$
    \item $s \rightarrow b \rightarrow c \rightarrow d \rightarrow f \rightarrow t$
\end{enumerate}
The four solutions turn out to be all the valid paths from $s$ to $t$ that avoid both $s \rightarrow c$ and $d \rightarrow t$.

\subsection{Discussion}

Our \nameBenders\ approach generates alternative solutions for max-min style interdiction problems. 
On our specific problem instance, we get a sensitivity analysis that shows several things. 
 
First, $c \rightarrow d$ is always interdicted by a competent attacker. Second, the optimal defender response to optimal attack always changes based off attacker strength in this network. The first point is something that could be recognized without alternative solutions by looking for chokepoints. However, the second point leverages the ability to look at all of the exact optimal alternative solutions as attacker strength changes.

\section{Conclusion}
\label{sec:conclusions}

We demonstrate a generation method and structural theory for alternative solutions for Benders Decomposition. Our AOS-Benders process maintains the core feature \nameBenders\ is known for in trading a single problem over the entire variable space for multiple problems over subsets of variables. The AOS-Benders process can be appended to a traditional \nameBenders algorithm, subject to some technical assumptions, with existing alternative solution generation codes and a certification step. AOS-Benders also provides the capability to still generate alternative solutions when extensive form problems are either intractable or are otherwise undesirable. Such a capability is important on a range of problem classes including large-scale stochastic programming and max-min interdiction problems. We also have a theoretical characterization of the alternative solutions through the sublevel sets. This enables strong claims about first-stage solution properties including exhaustive solution enumeration under variable projection; without AOS-Benders, such claims would either require application-specific theory, require enumeration over full \nameEF solutions, or be intractable entirely. 

The new capabilities provided by our AOS-Benders algorithm raise several new questions about alternative solution generation. First, which technical assumptions made about \nameBenders can be relaxed (e.g. relatively complete recourse or continuous second-stage variables)? Second, can AOS-Benders improve the known scaling challenges \citep[e.g.][]{Lau_2024_mga_alternatives_energy_applications_review} of alternative solution generation? Third, while \citet{israeli_wood} used problem-specific methods to generate multiple subproblem solutions, could recent ML approximation methods for subproblems \citep[e.g.][]{Larsen_Lodi_Tactical_Solutions_IJOC} help with scaling and what specific cut-pool management rules would best fit the AOS-Benders paradigm?
Fourth, there are recent advances in stochastic and bilevel programming that rely on \nameBenders\ paradigms with some adaptation \citep[e.g.][respectively]{Ozgun_Hooker_Stochastic_Planning_Logic_Based_Benders_2022_IJOC, Byeon_Van_Hent_Benders_Bilevel_2022_IJOC}, raising which paradigms support AOS-Benders-like alternative solution generation methods?
Fifth, which other problem decomposition methods (e.g. Dantzig-Wolfe, Progressive Hedging) admit alternative solution generation methods and under what assumptions?

There are also a range of application areas for AOS-Benders that remain to be explored. The separable generation of alternative solutions has a range of applications from first-stage only generation in long term planning models (e.g. electrical grid capacity expansion) and generation of alternative solutions under privacy concerns. Given the variety of \nameBenders\ modifications and extensions, which of them adapt best to an alternative solutions paradigm (e.g. cut selection rules) remains an open question. All of these questions and applications stand to enhance the new optimization problems and modeling questions that can be treated by alternative solutions generally and under problem decomposition specifically.

\appendix
Here we treat all the proofs necessary to prove Theorem \ref{thm:proj_ef_subseteq_proj_benders} and the converse Remarks from Section \ref{sec:ext_benders:impact_of_approx}.
\section{Sublevel Set Technical Results}
We define two additional sublevel sets to match the intermediate steps from the \nameEF\ problem to the \nameBM\ problem made in Section \ref{sec:background:benders}.
\begin{align}
    \label{eqn:extending_benders:epigraphical_variant_level_set_def}
    \levelEV(\tau) = \levelset(&\funcG_{\theta}, \epigraph(\funcQ),\tau) \\
    \label{eqn:extending_benders:projected_variable_level_set_def}
    \levelPV(\tau) = \levelset(&\funcg+\funcQ, \setX,\tau) 
\end{align}

The following lemma shows that the relationship between the \nameEV\   problem and \nameBM\  problem sublevel sets is one of containment.
\begin{lemma}
\label{thm:benders_contains_EV_levelset}
    $\levelEV(\tau) \subseteq \levelBM(\tau), \forall \hatsetV \subseteq \setvpi, \forall \tau \in \mathR$
\end{lemma}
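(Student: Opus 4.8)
The plan is to reduce this lemma to Theorem \ref{thm:epigraph_containment} (the epigraph containment result), since the two sublevel sets in question share identical structure except for their feasible domains. Reading off the definitions \equationRef{eqn:extending_benders:epigraphical_variant_level_set_def} and \equationRef{eqn:extending_benders:master_problem_level_set_def}, both $\levelEV(\tau)$ and $\levelBM(\tau)$ are sublevel sets of the \emph{same} objective $\funcG_{\theta}$ at the \emph{same} level $\tau$; the only difference is that $\levelEV(\tau)$ is carved out of $\epigraph(\funcQ)$ while $\levelBM(\tau)$ is carved out of $\epigraph(\funchatQV)$. Thus the claim is really an instance of sublevel-set monotonicity under domain inclusion: enlarging the feasible region (with objective and level held fixed) can only enlarge the resulting sublevel set.

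First I would dispatch the trivial case $\levelEV(\tau) = \varnothing$, for which the inclusion holds vacuously. For the substantive case, I would take an arbitrary point $(\barvecx, \bartheta) \in \levelEV(\tau)$ and unpack the definition of $\levelset$: this yields the two facts $(\barvecx, \bartheta) \in \epigraph(\funcQ)$ and $\funcG_{\theta}(\barvecx, \bartheta) \leq \tau$. The first fact is exactly the hypothesis needed to invoke Theorem \ref{thm:epigraph_containment}, which promotes membership in $\epigraph(\funcQ)$ to membership in $\epigraph(\funchatQV)$ for every $\hatsetV \subseteq \setvpi$.

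The key observation is that the level condition $\funcG_{\theta}(\barvecx, \bartheta) \leq \tau$ is a statement about the point itself and is indifferent to which domain we regard that point as lying in, so it transfers unchanged. Combining the promoted membership $(\barvecx, \bartheta) \in \epigraph(\funchatQV)$ with the unchanged inequality $\funcG_{\theta}(\barvecx, \bartheta) \leq \tau$ recovers exactly the defining conditions of $\levelBM(\tau)$, giving $(\barvecx, \bartheta) \in \levelBM(\tau)$ and hence the containment. Since the argument quantifies over arbitrary $\hatsetV \subseteq \setvpi$ and $\tau \in \mathR$ uniformly, the full statement follows.

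I do not anticipate a genuine obstacle here; the only thing to be careful about is keeping explicit that the two sublevel sets use a common objective $\funcG_{\theta}$, so that no re-evaluation of the objective is required when passing between epigraphs. All the real work is already carried by Theorem \ref{thm:epigraph_containment}, and this lemma is essentially the bookkeeping step that lifts that epigraph inclusion to the corresponding sublevel sets.
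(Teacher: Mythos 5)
Your proof is correct and follows the paper's own argument exactly: handle the empty case, take a point of $\levelEV(\tau)$, use Theorem \ref{thm:epigraph_containment} to move from $\epigraph(\funcQ)$ to $\epigraph(\funchatQV)$, and note that the level condition $\funcg(\barvecx)+\bartheta \leq \tau$ carries over unchanged. No differences worth noting.
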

\begin{proof}{Proof}
This holds trivially if $\levelEV(\tau) = \varnothing$. Let $(\barvecx, \bartheta) \in \levelEV(\tau)$ for a given $\tau \in \mathR$. 
To establish $(\barvecx, \bartheta) \in \levelBM(\tau),\ \forall \hatsetV \subseteq \setvpi$ we need to show $\funcg(\barvecx) + \bartheta \leq \tau$ and $(\barvecx, \bartheta) \in \epigraph(\funchatQV), \ \forall \hatsetV \subseteq \setvpi$. We know that $\funcg(\barvecx) + \bartheta \leq \tau$ holds by definition of $\levelEV(\tau)$. Since $(\barvecx, \bartheta) \in \epigraph(\funcQ)$ by definition of $\levelEV(\tau)$, $(\barvecx, \bartheta) \in \epigraph(\funchatQV), \ \forall \hatsetV \subseteq \setvpi$ holds by Proposition \ref{thm:epigraph_containment}. So $(\barvecx, \bartheta) \in \levelBM(\tau),\ \forall \hatsetV \subseteq \setvpi$ and completes the containment proof. 
\end{proof}
The relationship between the \namePV\  problem and the \nameEV\   problem sublevel sets is equivalence when projected into the shared first-stage variables. The intuition here is that the \namePV\  problem and the \nameEV\   problem are effectively the same problem with $\theta$ serving as the helper variable.
\begin{lemma}
\label{thm:PV_is_projected_EV_levelset}
\(\levelPV(\tau) = \projX(\levelEV(\tau)), \forall \tau \in \mathR\)
\end{lemma}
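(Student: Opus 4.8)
The plan is to prove the set equality by establishing both inclusions directly from the definitions. The single fact that drives everything is that membership in $\epigraph(\funcQ)$ is characterized cleanly by $\funcQ$ itself: since $\funcQ(\vecx) = \max_{\vecpi \in \setvpi} \vecpi^T(\vech - \matrixT\vecx)$ for $\vecx \in \setX$, the requirement that $\theta \geq \vecpi^T(\vech - \matrixT\vecx)$ hold for \emph{all} $\vecpi \in \setvpi$ is equivalent to the single inequality $\theta \geq \funcQ(\vecx)$. Hence $(\vecx, \theta) \in \epigraph(\funcQ)$ if and only if $\vecx \in \setX$ and $\theta \geq \funcQ(\vecx)$. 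I would state this equivalence first, noting it relies on $\funcQ(\vecx)$ being finite for $\vecx \in \setX$, which the relatively complete recourse and finite solution assumptions guarantee. The empty-set case ($\levelPV(\tau) = \varnothing$) is immediate and can be absorbed into the inclusions below.

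For the forward inclusion $\levelPV(\tau) \subseteq \projX(\levelEV(\tau))$, take $\barvecx \in \levelPV(\tau)$, so that $\barvecx \in \setX$ and $\funcg(\barvecx) + \funcQ(\barvecx) \leq \tau$. The natural lift is to set $\bartheta = \funcQ(\barvecx)$: then $(\barvecx, \bartheta) \in \epigraph(\funcQ)$ by the equivalence above (with equality), and $\funcG_{\theta}(\barvecx, \bartheta) = \funcg(\barvecx) + \funcQ(\barvecx) \leq \tau$, so $(\barvecx, \bartheta) \in \levelEV(\tau)$ and $\barvecx = \projX(\barvecx, \bartheta) \in \projX(\levelEV(\tau))$.

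For the reverse inclusion $\projX(\levelEV(\tau)) \subseteq \levelPV(\tau)$, take $\barvecx \in \projX(\levelEV(\tau))$, so there is some $\bartheta$ with $(\barvecx, \bartheta) \in \levelEV(\tau)$. Membership yields both $\bartheta \geq \funcQ(\barvecx)$ (from $\epigraph(\funcQ)$, using the equivalence) and $\funcg(\barvecx) + \bartheta \leq \tau$. Chaining these gives $\funcg(\barvecx) + \funcQ(\barvecx) \leq \funcg(\barvecx) + \bartheta \leq \tau$, so $\barvecx \in \levelPV(\tau)$.

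I do not expect a genuine obstacle here; the argument is a routine double inclusion. The only point requiring care is the asymmetry in how $\theta$ is treated: the sublevel set $\levelEV(\tau)$ contains every $\theta \geq \funcQ(\barvecx)$ compatible with the budget $\tau$, not merely $\theta = \funcQ(\barvecx)$. So in the forward direction I must exhibit a specific feasible value (the canonical choice $\bartheta = \funcQ(\barvecx)$), whereas in the reverse direction I exploit monotonicity to discard the slack $\bartheta - \funcQ(\barvecx) \geq 0$. This is exactly the sense in which $\theta$ acts as the helper variable described in the surrounding text.
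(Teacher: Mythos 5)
Your proof is correct and follows essentially the same double-containment argument as the paper: lifting $\barvecx$ to $(\barvecx, \funcQ(\barvecx))$ in the forward direction and using $\bartheta \geq \funcQ(\barvecx)$ to discard the slack in the reverse direction. The only cosmetic difference is that you state the epigraph-membership equivalence $(\vecx,\theta) \in \epigraph(\funcQ) \Leftrightarrow \vecx \in \setX,\ \theta \geq \funcQ(\vecx)$ explicitly up front, whereas the paper uses it implicitly.
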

\begin{proof}{Proof}
We prove this by double containment for fixed $\tau \in \mathR$. 
First, we show $\levelPV(\tau) \subseteq \projX(\levelEV(\tau))$. We take $\barvecx \in \levelPV(\tau)$ and let $\bartheta = \funcQ(\barvecx)$. We need to show that $(\barvecx, \bartheta) \in \epigraph(\funcQ)$ and $\funcg(\barvecx) + \bartheta \leq \tau$ to satisfy the definition of $\levelEV(\tau)$. Since $\barvecx \in \setX$ by definition of $\levelPV(\tau)$ and $\bartheta = \funcQ(\barvecx) \in \mathR$ by the relatively complete recourse and dual non-emptiness assumptions on $\funcQ$, this gives $(\barvecx, \bartheta) \in \epigraph(\funcQ)$. We know $\funcg(\barvecx) + \funcQ(\barvecx) \leq \tau$ by definition of $\levelPV(\tau)$, giving $\funcg(\barvecx) + \bartheta \leq \tau$. Thus $\barvecx \in \projX(\levelEV(\tau))$.
Second, we show that $\projX(\levelEV(\tau)) \subseteq \levelPV(\tau)$. We take $(\barvecx, \bartheta) \in \levelEV(\tau)$. We need to show that $\barvecx \in \setX$ and $\funcg(\barvecx) + \funcQ(\barvecx) \leq \tau$ to establish  $\barvecx \in \levelPV(\tau)$. We know $\barvecx \in X$ by definition of $\levelEV(\tau)$ as $\projX(Q) \subseteq \setX$. As $\funcg(\barvecx) + \bartheta \leq \tau$ and $\bartheta \geq \funcQ(\barvecx)$ by definition of $\levelEV(\tau)$, then $\funcg(\barvecx) + \funcQ(\barvecx) \leq \tau$ follows immediately. Thus $\barvecx \in \levelPV(\tau)$.
\end{proof}
The combination of Lemma \ref{thm:benders_contains_EV_levelset} and Lemma \ref{thm:PV_is_projected_EV_levelset} also gives the following corollary relating the \namePV\  problem and the \nameBM\  problem sublevel sets:
\begin{lemma}
\label{thm:PV_subset_proj_mp}
\(\levelPV(\tau) \subseteq \projX(\levelBM(\tau)), \forall \ \hatsetV \subseteq \setvpi, \forall \tau \in \mathR\)
\end{lemma}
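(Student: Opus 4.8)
The plan is to chain together the two preceding lemmas, using only the elementary fact that applying a function to sets preserves inclusion. Since Lemma~\ref{thm:PV_is_projected_EV_levelset} already identifies $\levelPV(\tau)$ exactly as the projection $\projX(\levelEV(\tau))$, and Lemma~\ref{thm:benders_contains_EV_levelset} already establishes the set inclusion $\levelEV(\tau) \subseteq \levelBM(\tau)$, the only additional ingredient is that $\projX$ is monotone with respect to containment.

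First I would fix an arbitrary $\hatsetV \subseteq \setvpi$ and an arbitrary $\tau \in \mathR$, so that both hypotheses apply simultaneously. Then I would invoke Lemma~\ref{thm:benders_contains_EV_levelset} to get $\levelEV(\tau) \subseteq \levelBM(\tau)$. Next I would apply the projection operator $\projX$ of \equationRef{eqn:proj_x_def} to both sides of this inclusion. Because $\projX$ is a (linear) map, its image respects subset ordering: for any sets $A \subseteq B$ one has $\projX(A) \subseteq \projX(B)$, since every element of $\projX(A)$ is $\projX(\vecp)$ for some $\vecp \in A \subseteq B$ and hence lies in $\projX(B)$. This yields $\projX(\levelEV(\tau)) \subseteq \projX(\levelBM(\tau))$.

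Finally I would rewrite the left-hand side using the equality from Lemma~\ref{thm:PV_is_projected_EV_levelset}, namely $\levelPV(\tau) = \projX(\levelEV(\tau))$, to conclude $\levelPV(\tau) \subseteq \projX(\levelBM(\tau))$. Since $\hatsetV$ and $\tau$ were arbitrary, the claim holds for all $\hatsetV \subseteq \setvpi$ and all $\tau \in \mathR$.

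I do not anticipate any genuine obstacle here: this is a corollary whose entire content is the composition of two already-proved facts. The only point requiring a line of justification is the monotonicity of the image under $\projX$, which is immediate from the definition of the image of a set under a map and does not depend on $\projX$ being the particular coordinate projection — though here it is, so no surjectivity or injectivity subtleties arise. The substantive work was already done in Lemmas~\ref{thm:benders_contains_EV_levelset} and~\ref{thm:PV_is_projected_EV_levelset}, where the relatively complete recourse and dual non-emptiness assumptions (ensuring $\funcQ(\barvecx) \in \mathR$) and Theorem~\ref{thm:epigraph_containment} were respectively consumed.
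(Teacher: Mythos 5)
Your proposal is correct and matches the paper's own proof, which likewise obtains the result by applying the monotonicity of the projection operator $\projX$ to the inclusion of Lemma~\ref{thm:benders_contains_EV_levelset} and then rewriting via the equality of Lemma~\ref{thm:PV_is_projected_EV_levelset}. The paper states this in one line; you have simply spelled out the same composition explicitly.
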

\begin{proof}{Proof}
    This follows directly by the definition of the projection operator of \equationRef{eqn:proj_x_def} on Lemma \ref{thm:benders_contains_EV_levelset} and Lemma \ref{thm:PV_is_projected_EV_levelset}.
\end{proof}
The relationship between the \nameEF\  problem and the \namePV\  problem sublevel sets is equivalence once projected to the shared first-stage variables. The intuition here is that the \nameEF\  problem and the \namePV\  problem are solving the same problem with the value function wrapping the second-stage elements of the \nameEF\  problem.
\begin{lemma}
    \label{thm:proj_efp_is_PV}
    $\projX(\levelEF(\tau)) = \levelPV(\tau), \forall \tau \in \mathR$
\end{lemma}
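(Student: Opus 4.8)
The plan is to establish the equality by double containment for a fixed $\tau \in \mathR$, mirroring the structure used in the proof of Lemma \ref{thm:PV_is_projected_EV_levelset}. I would dispose of the trivial case $\levelPV(\tau) = \varnothing$ (equivalently $\projX(\levelEF(\tau)) = \varnothing$) at the outset, and then argue the two inclusions for nonempty sets.

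First I would show $\projX(\levelEF(\tau)) \subseteq \levelPV(\tau)$. Take $\barvecx \in \projX(\levelEF(\tau))$, so there exists some $\barvecy$ with $(\barvecx, \barvecy) \in \levelEF(\tau)$. Unfolding the definition $\levelEF(\tau) = \levelset(\funcG_q, \setGamma, \tau)$ gives $(\barvecx, \barvecy) \in \setGamma$ and $\funcg(\barvecx) + \vecq^T \barvecy \leq \tau$; from $(\barvecx, \barvecy) \in \setGamma$ I read off $\barvecx \in \setX$ and $\barvecy \in \funcY(\barvecx)$. Since $\funcQ(\barvecx)$ is the minimum of $\vecq^T \vecy$ over $\funcY(\barvecx)$, it is bounded above by the feasible value $\vecq^T \barvecy$, hence $\funcg(\barvecx) + \funcQ(\barvecx) \leq \funcg(\barvecx) + \vecq^T \barvecy \leq \tau$, which places $\barvecx \in \levelPV(\tau)$. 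This direction needs only that $\funcQ$ is defined as a minimum, so any feasible second-stage point furnishes an upper bound.

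For the reverse containment $\levelPV(\tau) \subseteq \projX(\levelEF(\tau))$, take $\barvecx \in \levelPV(\tau)$, so $\barvecx \in \setX$ and $\funcg(\barvecx) + \funcQ(\barvecx) \leq \tau$. The key step is to manufacture a companion second-stage vector that attains the value function: by the relatively complete recourse and dual non-emptiness assumptions, $\funcQ(\barvecx)$ is finite and the recourse LP attains its optimum at some $\barvecy^{*} \in \funcY(\barvecx)$ with $\vecq^T \barvecy^{*} = \funcQ(\barvecx)$. Then $(\barvecx, \barvecy^{*}) \in \setGamma$ and $\funcg(\barvecx) + \vecq^T \barvecy^{*} = \funcg(\barvecx) + \funcQ(\barvecx) \leq \tau$, so $(\barvecx, \barvecy^{*}) \in \levelEF(\tau)$ and therefore $\barvecx = \projX(\barvecx, \barvecy^{*}) \in \projX(\levelEF(\tau))$.

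The main obstacle is precisely this attainment claim in the reverse direction: I must guarantee that the infimum defining $\funcQ(\barvecx)$ is realized by a genuine feasible point rather than merely approached, so that an explicit witness $(\barvecx, \barvecy^{*}) \in \levelEF(\tau)$ exists. This is exactly where the structural assumptions of Section \ref{sec:background:benders} are doing the work — relatively complete recourse ensures $\funcY(\barvecx) \neq \varnothing$ and excludes the value $+\infty$, while the finite-solution and dual non-emptiness assumptions exclude $-\infty$ and guarantee the LP attains its optimum (at a vertex of $\funcY(\barvecx)$). Once attainment is secured, both inclusions are routine, and the lemma follows.
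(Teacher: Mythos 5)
Your proposal is correct and follows essentially the same route as the paper's proof: double containment, with the forward inclusion using that any feasible $\barvecy$ upper-bounds the minimum $\funcQ(\barvecx)$, and the reverse inclusion manufacturing a witness $\barvecy^{*}$ attaining $\funcQ(\barvecx)$ via the relatively complete recourse (and finiteness) assumptions. Your explicit discussion of why attainment holds is slightly more careful than the paper's one-line appeal to $\argmin$, but the argument is the same.
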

\begin{proof}{Proof}
We prove this by double containment for a given $\tau \in \mathR$. 
First, we show that $\projX(\levelEF(\tau)) \subseteq \levelPV(\tau)$. We take $(\barvecx, \barvecy) \in \levelEF(\tau)$. We need to show that $\barvecx \in \setX$ and $\funcg(\barvecx) + \funcQ(\barvecx) \leq \tau$ to establish $\barvecx \in \levelPV(\tau)$. We know that $\barvecx \in \setX$ by definition of $\levelEF(\tau)$ as $\projX(\setGamma) \subseteq \setX$. We know that $\vecq^T\barvecy \geq \funcQ(\barvecx)$ for $(\barvecx, \barvecy) \in \setGamma$ by definition of $\funcQ$ and $\funcg(\barvecx) + \vecq^T\barvecy \leq \tau$ by definition of $\levelEF(\tau)$ giving $\funcg(\barvecx) + \funcQ(\barvecx) \leq \tau$. Thus $\barvecx \in \levelPV(\tau)$.
Second, we show that $\levelPV(\tau) \subseteq \projX(\levelEF(\tau))$. We take $\barvecx \in \levelPV(\tau)$ and choose $\barvecy \in \argmin_{\vecy \in \mathR^{n_2}_+} \vecq^T \vecy$ $\ s.t. \matrixW\vecy + \matrixT\vecx = \vech$ which is guaranteed to exist by the relatively complete recourse assumption. We need $(\barvecx, \barvecy) \in \setGamma$ and $\funcg(\barvecx) + \vecq^T\barvecy \leq \tau$ to show $\barvecx \in \projX(\levelEF(\tau))$. We know $\barvecx \in \setX$ by definition of $\levelPV(\tau)$ and $\barvecy$ was chosen to guarantee $\matrixW\barvecy + \matrixT\barvecx = \vech$ hence $(\barvecx, \barvecy) \in \setGamma$. We know that $\vecq^T\barvecy = \funcQ(\barvecx)$ by choice of $\barvecy$ and $\funcg(\barvecx) + \funcQ(\barvecx) \leq \tau$ by definition of $\levelPV(\tau)$ giving $\funcg(\barvecx) + \vecq^T\barvecy \leq \tau$. Thus $\barvecx \in \projX(\levelEF(\tau))$.
\end{proof}

\section{Non-Equivalence of the Extensive Form and Benders Sublevel Sets}
\label{sec:ext_benders:impact_of_approx:remarks}
In the previous section, we showed that the \nameBM\  problem sublevel set, for fixed $\tau$, contains the \nameEF\  sublevel set when both are projected to the shared first-stage variables. Ideally the two sublevel sets would be equivalent, which would enable the two sets to be used interchangeably. This is not the case in general as we show by providing a simple counterexample:
\begin{align*}
    \funcQ(x) = \min_{\vecy} &[1, \ \ 1] \, y\\
                        &[1, \ \ -1]\, \vecy = \vecx \\
                        &\vecy \in \mathR^2_+
\end{align*}
This corresponds to $\matrixW = [1, \ \ -1]$, $q^T= [1, \ \ 1]$, $T = -1$ and $\vech = 0$. This results in $\setPi = [-1, 1]$ and $\setvpi = \{-1,1\}$. The possible non-empty combination of vertices are as $\setV_1 = \{1\}$, $\setV_2 = \{-1\}$, and $\setV_3= \{-1,1\}$.

As approximations of $\funcQ$, we see $\funcQ_{\hatsetV_1}(x) = x$ and $\funcQ_{\hatsetV_2}(x) = -x$. We recover $\funcQ(x) = |x|$ with $\funcQ_{\hatsetV_3}$ using both dual vertices, which was shown visually in Figure \ref{fig:q_abs_x:full_figure}. We start with the relationship between approximate and exact epigraphs of the value function:
\begin{remark}
    $\epigraph(\funchatQV) \subseteq \epigraph (\funcQ)$ does not hold in general.
\end{remark}
\begin{proof}{Proof}
    Proof by construction.
    Let $\matrixW = [1, \ \ -1]$, $\vecq^T= [1, \ \ 1]$, $T = -1$ and $h = 0$. Then $\funcQ(x) = |x|$.
    Choose $\hatsetV$ = \{1\}, so $\funchatQV(x) = x$.
    $(-1,0) \in \epigraph(\funchatQV)$ but $(-1, 0) \not \in \epigraph(\funcQ)$.
\end{proof}
If the approximation of the value function is missing the supporting hyperplanes then there can be points admitted to the resulting approximate epigraph that the exact epigraph does not contain. Since the sublevel sets for the \nameBM\  problem rely on the approximate epigraph, $\epigraph(\funchatQV)$, it makes sense that the same example serves as a counterexample for the corresponding sublevel set relationships:
\begin{remark}
The converses of Lemma \ref{thm:benders_contains_EV_levelset}, Lemma \ref{thm:PV_subset_proj_mp}, and Theorem \ref{thm:proj_ef_subseteq_proj_benders} do not hold in general
\end{remark}
\begin{proof}{Proof}
    Proof by construction.
    Let $\matrixW = [1, \ \ -1]$, $\vecq^T= [1, \ \ 1]$, $T = -1$, $h = 0$ and and $\funcg(x) = 0$. Then $\funcQ(x) = |x|$ and $\funcg(x) + \funcQ(x) = |x|$.
    So $\levelEV(-1) = \varnothing$, $\levelPV(-1) = \varnothing$, and $\levelEF(-1) = \varnothing$. Let $\hatsetV$ = \{1\} then $\funchatQV(x) = x$ and $\funcg(x) + \funchatQV(x) = x$. Then $(-1,0) \in \levelBM(-1)$, but $(-1,0) \not\in \levelEV(-1)$, $\projX((-1,0)) = -1 \not\in \levelPV(-1)$, and $\projX((-1,0)) = -1, \ -1 \not\in \projX(\levelEF(-1))$.
\end{proof}
\bibliographystyle{informs2014}
\bibliography{bib_data}
\end{document}